\newcommand{\I}{\mathcal{I}}
\newcommand{\M}{\mathcal{M}}
\newcommand{\Z}{\mathbb{Z}}
\newcommand{\AR}{{\rm Aut}(H_1(N_g;R),\cdot)}
\newcommand{\AZ}{{\rm Aut}(H_1(N_g;\Z),\cdot)}
\newcommand{\Az}{{\rm Aut}(H_1(N_g;\Z/{2\Z}),\cdot)}
\newcommand{\la}{\langle}
\newcommand{\ra}{\rangle}
\newcommand{\g}{\Gamma_2}
\newtheorem{thm}{Theorem}[section]
\newtheorem{prop}[thm]{Proposition}
\newtheorem{lem}[thm]{Lemma}
\newtheorem{cor}[thm]{Corollary}
\theoremstyle{example}
\newtheorem{exam}[thm]{Example}
\newtheorem{rem}[thm]{Remark}
\author[S. Hirose]{Susumu Hirose}
\address{Department of Mathematics, Faculty of Science and Technology, Tokyo University of Science, Noda, Chiba, 278-8510, Japan}
\email{hirose\_susumu@ma.noda.tus.ac.jp}
\thanks{The first author was supported by Grant-in-Aid for Scientific
Research (C) (No. 24540096), Japan Society for the Promotion of Science}
\author[R. Kobayashi]{Ryoma Kobayashi}
\address{Department of General Education, Ishikawa National College of
Technology, Tsubata, Ishikawa, 929-0392, Japan}
\email{kobayashi\_ryoma@ishikawa-nct.ac.jp}
\begin{document}

\title[A normal generating set for $\I(N_g)$]
{A normal generating set for the Torelli group of a
non-orientable closed surface}
\maketitle

\begin{abstract}
For a closed surface $S$, its Torelli group $\I(S)$ is the subgroup of
 the mapping class group of $S$ consisting of elements acting trivially
 on $H_1(S;\Z)$.
When $S$ is orientable, a generating set for $\I(S)$ is known (see
 \cite{p}).
In this paper, we give a normal generating set of $\I(N_g)$ for
 $g\geq4$, where $N_g$ is a genus-$g$ non-orientable closed surface.
\end{abstract}

\section{Introduction}\label{intro}

For a closed connected {\it non-orientable} surface $S$, the {\it
mapping class group} $\M(S)$ of $S$ is defined to be the group of
isotopy classes of all diffeomorphisms over $S$.
For a closed connected {\it orientable} surface $S$, the {\it mapping
class group} $\M(S)$ of $S$ is defined to be the group of isotopy
classes of all {\it orientation-preserving} diffeomorphisms over $S$.
In this paper, for $x,y\in\M(S)$ the composition $yx$ means that we
first apply $x$ and then $y$.
The {\it Torelli group} $\I(S)$ of $S$ is the subgroup of $\M(S)$
consisting of elements acting trivially on $H_1(S;\Z)$.
Let $\Sigma_g$ be a genus-$g$ orientable closed surface.
Powell~\cite{p} showed that $\I(\Sigma_g)$ is generated by 
{\it BSCC maps} and {\it BP maps}.
In \cite{pu}, Putman proved Powell's result more conceptually.
In addition, Johnson~\cite{j} showed that $\I(\Sigma_g)$ is generated by
a finite number of BP maps.
In this paper, we consider the case where $S$ is a non-orientable closed
surface.

Let $N_g$ denote a genus-$g$ non-orientable closed surface, that is,
$N_g$ is a connected sum of $g$ real projective planes.
As another classification, we see that $N_g$ is a connected sum of a
genus-$h$ orientable closed surface with $(g-2h)$ real projective
planes, for $0\leq{h}<\frac{g}{2}$.
In this paper, we regard $N_g$ as a surface which is obtained by
attaching $g-2h$ M\"obius bands to a genus-$h$ compact orientable
surface with $g-2h$ boundaries for $0\leq{h}<\frac{g}{2}$ (see
Figure~\ref{non-ori}).
For $R=\Z$ and $\Z/{2\Z}$, let
$\cdot:H_1(N_g;R)\times{H_1(N_g;R)}\to\Z/{2\Z}$ be the mod $2$
intersection form, and let $\AR$ be the group of automorphisms over
$H_1(N_g;R)$ preserving the mod $2$ intersection form.
McCarthy-Pinkall~\cite{mp} and Gadgil-Pancholi~\cite{gp} proved that the
natural homomorphism $\rho:\M(N_g)\to\AZ$ is surjective.

\begin{figure}[htbp]
\includegraphics[scale=0.5]{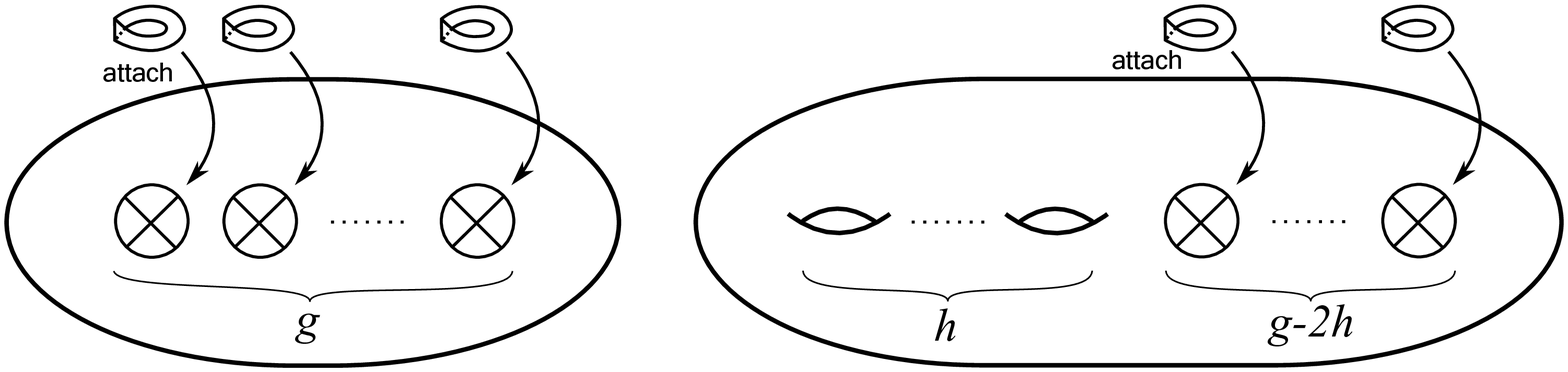}
\caption{A genus-$g$ non-orientable closed surface $N_g$.}\label{non-ori}
\end{figure}

Lickorish~\cite{l1} showed that $\M(N_g)$ is generated by Dehn twists
and $Y$-homeomorphisms.
In addition, Lickorish~\cite{l2} showed that the subgroup of $\M(N_g)$
generated by Dehn twists is an index $2$ subgroup of $\M(N_g)$.
Hence $\M(N_g)$ is not generated by Dehn twists.
On the other hand, since a $Y$-homeomorphism acts on $H_1(N_g;\Z/{2\Z})$
trivially, $\M(N_g)$ is not generated by $Y$-homeomorphisms. 
Chillingworth~\cite{c} found a finite generating set for $\M(N_g)$.
Presentations for $\M(N_1)$ and $\M(N_2)$ are known classically.
A finite presentation for $\M(N_3)$ is obtained by Birman and
Chillingworth in \cite{bc}.
A finite presentation for $\M(N_4)$ is obtained by Szepietowski in
\cite{s1}.
Finally, a finite presentation for $\M(N_g)$ is obtained by Paris,
Szepietowski~\cite{ps} and Stukow~\cite{s} for $g\geq4$.

For a simple closed curve $c$ on $N_g$, $c$ is called an {\it $A$-circle}
(resp. an {\it $M$-circle}) if its regular neighborhood is an annulus
(resp. a M\"obius band) (see Figure~\ref{am-circle}).
Let $a$ and $m$ be an $A$-circle and an $M$-circle on $N_g$ respectively.
Suppose that $a$ and $m$ intersect transversely at only one point.
We define a $Y$-homeomorphism $Y_{m,a}$ as follows.
Let $K$ be a regular neighborhood of $a\cup m$ in $N_g$, and let $M$ be
a regular neighborhood of $m$ in the interior of $K$.
Note that $K$ is homeomorphic to the Klein bottle with a boundary.
$Y_{m,a}$ is a homeomorphism over $N_g$ which is described as the result
of pushing $M$ once along $a$ keeping the boundary of $K$ fixed 
(see Figure~\ref{y-homeo}).
For an $A$-circle $c$ on $N_g$, we denote by $t_c$ a Dehn twist about
$c$, and the direction of the twist is indicated by a small arrow
written beside $c$ as shown in Figure~\ref{dehn}.

\begin{figure}[htbp]
\subfigure[$A$-circles]{\includegraphics[scale=0.5]{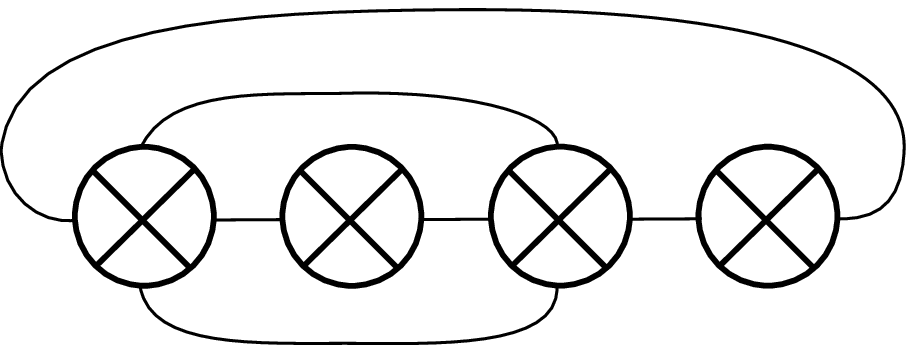}}\hspace{0.5cm}
\subfigure[$M$-circles]{\includegraphics[scale=0.5]{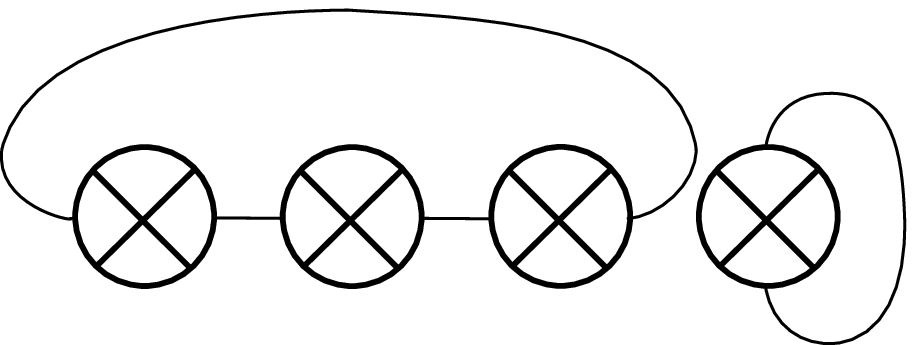}}
\caption{}\label{am-circle}
\end{figure}

\begin{figure}[htbp]
\includegraphics[scale=0.5]{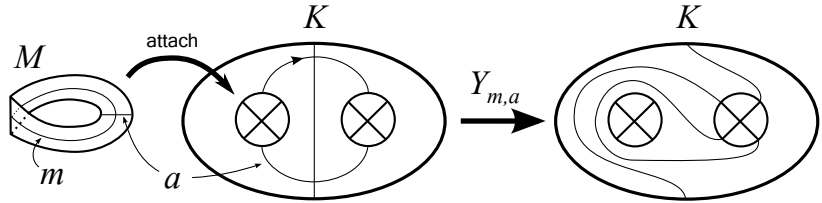}
\caption{A $Y$-homeomorphism $Y_{m,a}$.}\label{y-homeo}
\end{figure}

\begin{figure}[htbp]
\includegraphics{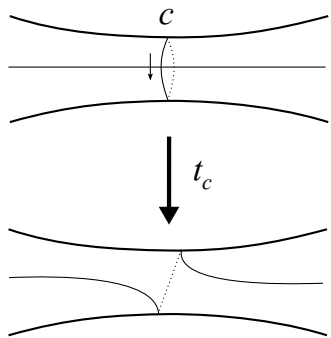}
\caption{A Dehn twist $t_c$ about $c$.}\label{dehn}
\end{figure}

Let $c$ be an $A$-circle on $N_g$ such that $N_g\setminus{c}$ is not
connected.
We call $t_c$ a \textit{bounding simple closed curve map}, for short a
\textit{BSCC map} (see Figure~\ref{BSCCBP}~(a)).
Let $c_1$ and $c_2$ be $A$-circles on $N_g$ such that $N_g\setminus{c_i}$
is connected, $N_g\setminus(c_1\cup c_2)$ is not connected and one of
its connected components is orientable.
We call $t_{c_1}t_{c_2}^{-1}$ a \textit{bounding pair map}, for short a
\textit{BP map} (see Figure~\ref{BSCCBP}~(b)).
In Section~\ref{basic}, we will see that BSCC maps and BP maps are in
$\I(N_g)$.

\begin{figure}[htbp]
\subfigure[a BSCC map $t_c$.]{\includegraphics[scale=0.5]{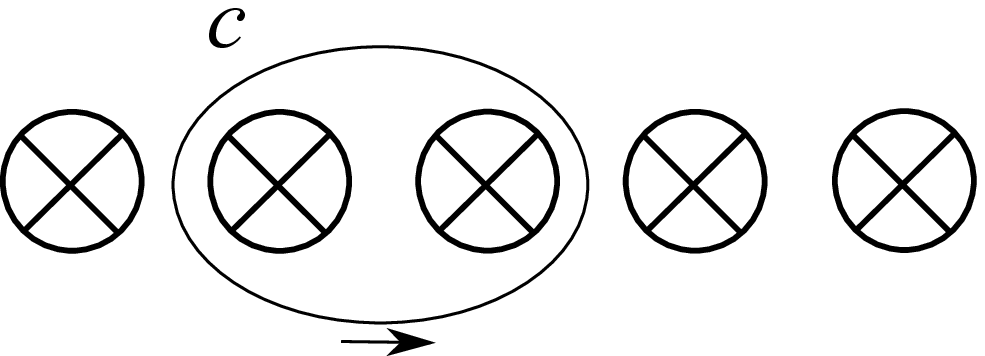}}

\subfigure[a BP map $t_{c_1}t_{c_2}^{-1}$.]{\includegraphics[scale=0.5]{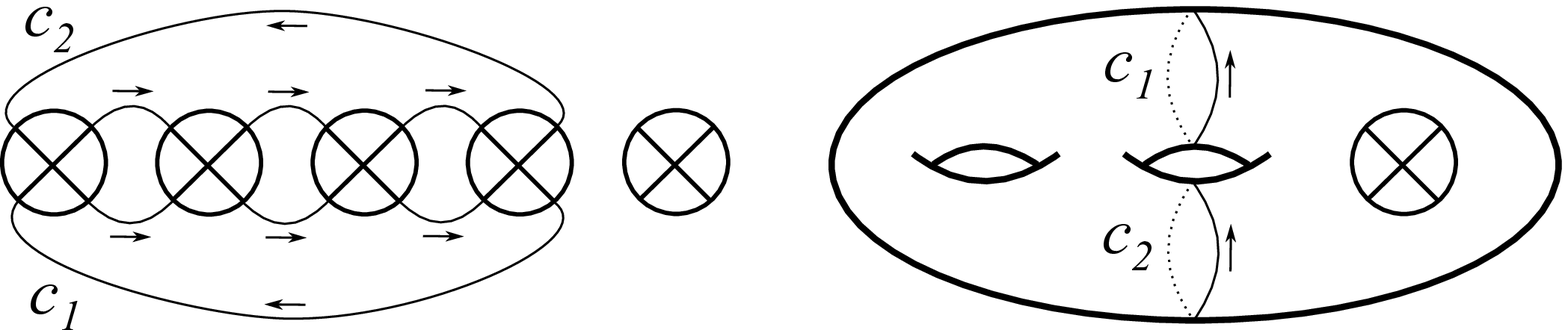}}
\caption{}\label{BSCCBP}
\end{figure}

For $h,b\geq1$, let $N_h^b$ be a non-orientable surface of genus $h$
with $b$ boundary components, and let $\Sigma_h^b$ an orientable surface
of genus $h$ with $b$ boundary components.
Our main result is the following.

\begin{thm}\label{thm}
For $g\geq5$, $\I(N_g)$ is generated by following elements.
\begin{itemize}
 \item BSCC maps $t_c$ such that one of connected components of
       $N_g\setminus{c}$ is homeomorphic to $N_2^1$, the other component
       of $N_g\setminus{c}$ is non-orientable.
 \item BP maps $t_{c_1}t_{c_2}^{-1}$ such that one of connected
       components of $N_g\setminus(c_1\cup c_2)$ is homeomorphic to
       $\Sigma_1^2$, the other component of $N_g\setminus(c_1\cup c_2)$
       is non-orientable.
\end{itemize}
$\I(N_4)$ is generated by following elements.
\begin{itemize}
 \item BSCC maps $t_c$ such that one of connected components of
       $N_4\setminus{c}$ is homeomorphic to $N_2^1$, the other component
       of $N_4\setminus{c}$ is non-orientable.
 \item BSCC maps $t_c$ such that one of connected components of
       $N_4\setminus{c}$ is homeomorphic to $N_2^1$, the other component
       of $N_4\setminus{c}$ is orientable.
 \item BP maps $t_{c_1}t_{c_2}^{-1}$ such that one of connected
       components of $N_4\setminus(c_1\cup c_2)$ is homeomorphic to
       $\Sigma_1^2$, the other component of $N_4\setminus(c_1\cup c_2)$
       is an annulus as shown in Figure~\ref{t-a}.
       \begin{figure}[htbp]
	\includegraphics[scale=0.5]{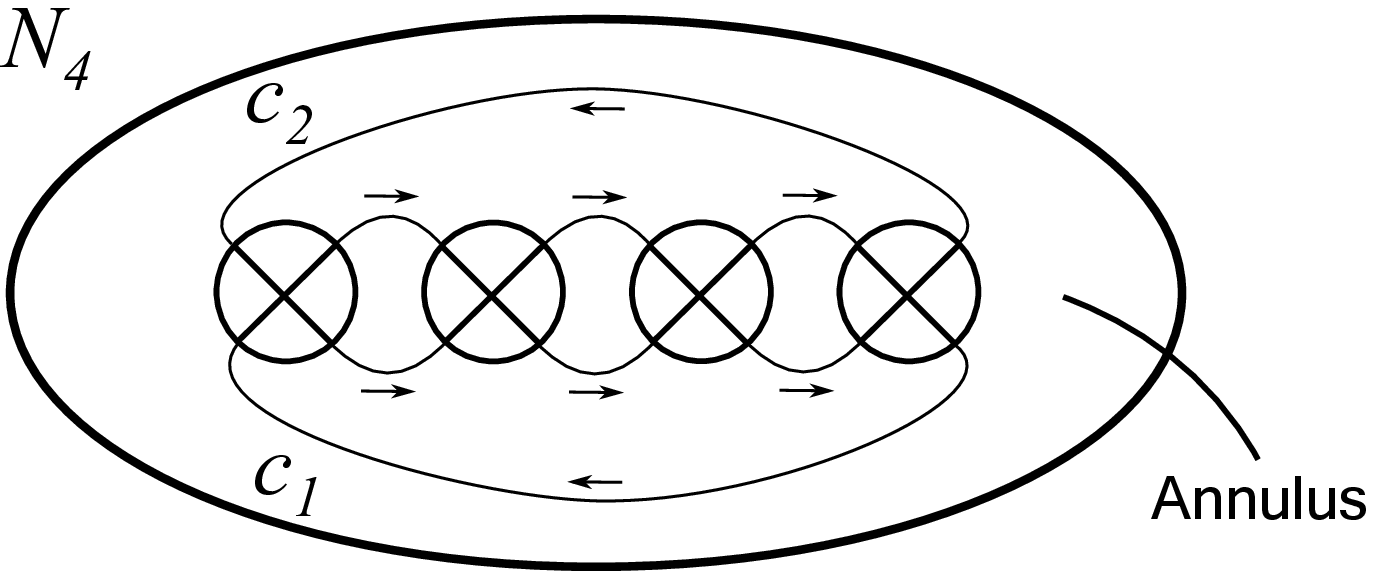}
	\caption{}\label{t-a}
       \end{figure}
\end{itemize}
\end{thm}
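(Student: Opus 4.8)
The plan is to let $G\le\I(N_g)$ be the subgroup generated by the elements listed in the statement and to prove $G=\I(N_g)$. Two general facts streamline everything. First, because $\rho:\M(N_g)\to\AZ$ is surjective, the change-of-coordinates principle applies: $\M(N_g)$ acts transitively on configurations of each fixed topological type, so conjugating any listed generator by an element of $\M(N_g)$ produces another generator of the same type. Hence $G$ is normal in $\M(N_g)$, which both justifies the phrase ``normal generating set'' and lets me slide curves around freely. Second, I would exploit the relation $Y_{m,a}^{2}=t_{\partial K}$: since the neighborhood $K$ of $a\cup m$ is a Klein bottle with one boundary, i.e. a copy of $N_2^1$ (and its complement $N_{g-2}^1$ is non-orientable for $g\ge3$), the first type of generator is exactly the square of a $Y$-homeomorphism. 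This is the bridge between Lickorish's generators of $\M(N_g)$ and the Torelli-theoretic generators I want.

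I would organize the proof in two stages. Stage one establishes the non-orientable analogue of Powell's theorem: that $\I(N_g)$ is generated by the collection of \emph{all} BSCC and BP maps (which lie in $\I(N_g)$ by Section~\ref{basic}). I would prove this by induction on $g$, comparing $\I(N_g)$ with the Torelli group of a subsurface obtained by cutting along a two-sided non-separating $A$-circle and capping. The comparison is governed by a Birman-type exact sequence together with the capping homomorphism; the kernel is generated by boundary Dehn twists and point-pushing maps, each of which is itself a product of BSCC and BP maps, so the generation statement descends in genus, with the small-genus base cases checked by hand.

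Stage two reduces an arbitrary BSCC map $t_c$ and an arbitrary BP map $t_{c_1}t_{c_2}^{-1}$ to the listed generators, i.e. to the cases where the split-off piece is $N_2^1$ or $\Sigma_1^2$ and the complement is non-orientable. The tools are the lantern relation, the chain relations, their non-orientable counterparts, and the relation $Y_{m,a}^{2}=t_{\partial K}$ above; together these let me write the twist about a curve bounding a large subsurface as a product of twists and $Y$-homeomorphisms bounding subsurfaces with one fewer crosscap or handle. Peeling off one genus-$2$ non-orientable (resp. genus-$1$ orientable) piece at a time, and using change-of-coordinates to reposition at each stage, I reach the listed generators provided the complementary component can always be kept non-orientable. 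This is precisely where the genus hypothesis enters: for $g\ge5$ there is always room to do so, whereas for $g=4$ certain splittings are forced to leave an orientable complement; this is why the $g=4$ list must additionally contain the orientable $N_2^1$ BSCC maps and the annular BP maps of Figure~\ref{t-a}, and these residual cases I would verify directly.

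The main obstacle is Stage two, and within it the requirement that every reduction be realized by an identity among elements that individually lie in $\I(N_g)$ while the complementary piece stays non-orientable. The lantern and chain relations do not respect the Torelli condition term by term, so each step must be rearranged into a product of genuine BSCC maps, BP maps, and squares of $Y$-homeomorphisms; assembling these identities and controlling the topology of the complement in the tightest low-genus configurations is the delicate core of the argument. The explicit $g=4$ analysis, where the extra generator types appear, is the most computation-heavy and error-prone part and would demand the most careful curve-by-curve checking.
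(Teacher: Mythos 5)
Your strategy is genuinely different from the paper's, and the difference matters: the paper never proves, and does not need, your Stage one. Instead it exploits the identification $\I(N_g)=\ker\rho'$ with $\rho':\g(N_g)\to\ker\Phi_g\cong\g(g-1)$, combines the known finite generating set for $\g(N_g)$ (Szepietowski, Hirose--Sato) with the known finite presentation of the congruence subgroup $\g(g-1)$ (Fullarton, Kobayashi, Margalit--Putman), and concludes via the elementary Lemma~\ref{lem1} that $\I(N_g)$ is normally generated in $\g(N_g)$ by the lifts of the relators of $\g(g-1)$. The only topological work left is the explicit, relator-by-relator verification in Section~\ref{proofthm} that each lift is a product of the listed BSCC and BP maps; your Stage two roughly corresponds to that verification together with Lemma~\ref{lem}, and the tools you name (lantern relation, chains, $Y_{m,a}^{2}=t_{\partial K}$) are indeed the ones used there.

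The genuine gap is Stage one. The assertion that $\I(N_g)$ is generated by the collection of \emph{all} BSCC and BP maps is not an available input --- it is essentially the theorem being proved --- and the route you sketch for it (induction on genus via a Birman-type exact sequence and capping) is exactly where the orientable arguments of Powell and Putman fail to transfer. In the non-orientable setting the point-pushing maps are crosscap slides and products of two Dehn twists (the maps $s_i([\gamma])$ of Section~\ref{proofthm}); these lie in $\g(N_g)$ but generally \emph{not} in $\I(N_g)$, so the kernel of your comparison map intersected with the Torelli group is not simply ``boundary twists and point pushes,'' and identifying that intersection, let alone expressing its elements as products of BSCC and BP maps, is a sub-problem of the same order of difficulty as the theorem itself. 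You would also need base cases: no Powell-type computation of $\I(N_3)$ or $\I(N_4)$ exists to anchor the induction, and the paper's own $g=4$ statement shows the answer is already structurally different there (extra generator types are forced). Without Stage one, Stage two has nothing to reduce, so as written the proposal does not constitute a proof.
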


In this theorem, these generating sets are infinite.
We do not know that whether or not $\I(N_g)$ can be finitely generated,
and generated by only BSCC maps or BP maps.

Here is an outline of a proof of Theorem~\ref{thm}.
Let $\g(N_g)$ be the subgroup of $\M(N_g)$ consisting of elements acting
trivially on $H_1(N_g;\Z/{2\Z})$.
We call $\g(N_g)$ the {\it level $2$ mapping class group} of $N_g$.
Note that $\I(N_g)\subset\g(N_g)$.
Let $\Phi_g:\AZ\to\Az$ be the natural epimorphism.
Consider the natural homomorphism $\rho':\g(N_g)\to\ker\Phi_g$.
Then we have that $\I(N_g)$ is equal to $\ker\rho'$.
Let $\g(n)=\ker(GL(n;\Z)\to{GL(n;\Z/{2\Z})})$.
We call $\g(n)$ the {\it level $2$ principal congruence subgroup} of
$GL(n;\Z)$.
McCarthy-Pinkall~\cite{mp} showed that $\ker\Phi_g$ is isomorphic to
$\g(g-1)$.
On the other hand, Szepietowski \cite{s3} gave a finite generating set
for $\g(N_g)$, and then the first author and Sato \cite{hs} gave a
minimal generating set for $\g(N_g)$.
Fullarton \cite{f}, the second author \cite{k}, Margalit and Putman gave
a finite presentation for $\g(n)$ independently.
Therefore, we obtain a normal generating set for $\I(N_g)$ in
$\g(N_g)$.

Here is an outline of this paper.
In Section~\ref{basic}, we will explain about basics on the Torelli
group of a non-orientable surface.
In Section~\ref{preliminary}, we will explain about the finite
generating set for $\g(N_g)$, the finite presentation for $\g(n)$ and an
isomorphism from $\ker\Phi_g$ to $\g(g-1)$.
In Section~\ref{norgen}, we will obtain a normal generating set for
$\I(N_g)$.
In Section~\ref{proofthm}, we will show that each normal generator of
$\I(N_g)$ obtained in Section~\ref{norgen} is described as a product of
BSCC maps and BP maps.

\section{Basics on the Torelli group of a non-orientable surface.}\label{basic}

There are BSCC maps of two types.
A BSCC map $t_c$ is called a BSCC map of type $(1,h)$ if each connected
component of $N_g\setminus{c}$ is non-orientable and one component of
$N_g\setminus{c}$ is homeomorphic to $N_h^1$ for
$1\leq{h}\leq\frac{g}{2}$ (see Figure~\ref{BSCCBP}~(a)).
A BSCC map $t_c$ is called a BSCC map of type $(2,h)$ if one component
of $N_g\setminus{c}$ is homeomorphic to $\Sigma_h^1$ for
$1\leq{h}<\frac{g}{2}$, and the other component is non-orientable (see
Figure~\ref{type2}).
Note that a BSCC map $t_c$ is trivial if $c$ bounds a M\"obius band (see
Theorem~3.4 of \cite{e}).

There are BP maps of two types.
A BP map $t_{c_1}t_{c_2}^{-1}$ is called a BP map of type $(1,h)$ if one
component of $N_g\setminus(c_1\cup{c_2})$ is homeomorphic to
$\Sigma_h^2$ for $1\leq{h}<\frac{g}{2}-1$, and the other component is
non-orientable (see Figure~\ref{BSCCBP}~(b)).
A BP map $t_{c_1}t_{c_2}^{-1}$ is called a BP map of type $(2,h)$ if
each component of $N_g\setminus(c_1\cup{c_2})$ is orientable and one
component of $N_g\setminus(c_1\cup{c_2})$ is homeomorphic to
$\Sigma_h^2$ for $1\leq{h}\leq\frac{g}{2}-1$ (see Figure~\ref{type2}).
Note that a BP map of type $(2,h)$ appears only if $g$ is even.

\begin{figure}[htbp]
\includegraphics[scale=0.5]{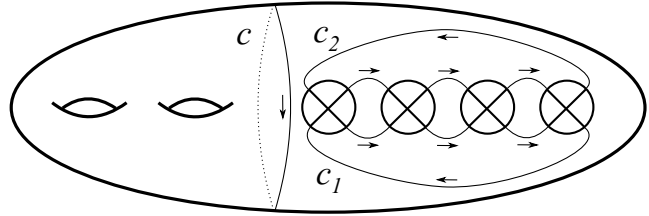}
\caption{A BSCC map $t_{c}$ of type $(2,2)$ and a BP map
 $t_{c_1}t_{c_2}^{-1}$ of type $(2,1)$.}\label{type2}
\end{figure}

At first, we show the following.

\begin{rem}\label{bsccbprem}
All BSCC maps and BP maps are in $\I(N_g)$.
\end{rem}

\begin{proof}\label{bsccbpremproof}
Let $c$, $d$, $c_1$, $c_2$, $d_1$ and $d_2$ be simple closed curves on
 $N_g$ as shown in Figure~\ref{bsccbp}.
Note that $t_c$ is a BSCC map of type $(1,h)$, $t_d$ is a BSCC map of
 type $(2,h)$, $t_{c_1}t_{c_2}^{-1}$ is a BP map of type $(1,h)$ and
 $t_{d_1}t_{d_2}^{-1}$ is a BP map of type $(2,h)$.
In $\M(N_g)$, any BSCC map of type $(1,h)$ (resp. type $(2,h)$) is
 conjugate to $t_c^{\pm1}$ (resp. $t_d^{\pm1}$), and any BP map of type
 $(1,h)$ (resp. type $(2,h)$) is conjugate to
 $(t_{c_1}t_{c_2}^{-1})^{\pm1}$ (resp. $(t_{d_1}t_{d_2}^{-1})^{\pm1}$).
Hence it suffice to show that $t_c$, $t_d$, $t_{c_1}t_{c_2}^{-1}$ and
 $t_{d_1}t_{d_2}^{-1}$ are in $\I(N_g)$.

\begin{figure}[htbp]
\subfigure[A BSCC map $t_c$ of type $(1,h)$.]{\includegraphics[scale=0.5]{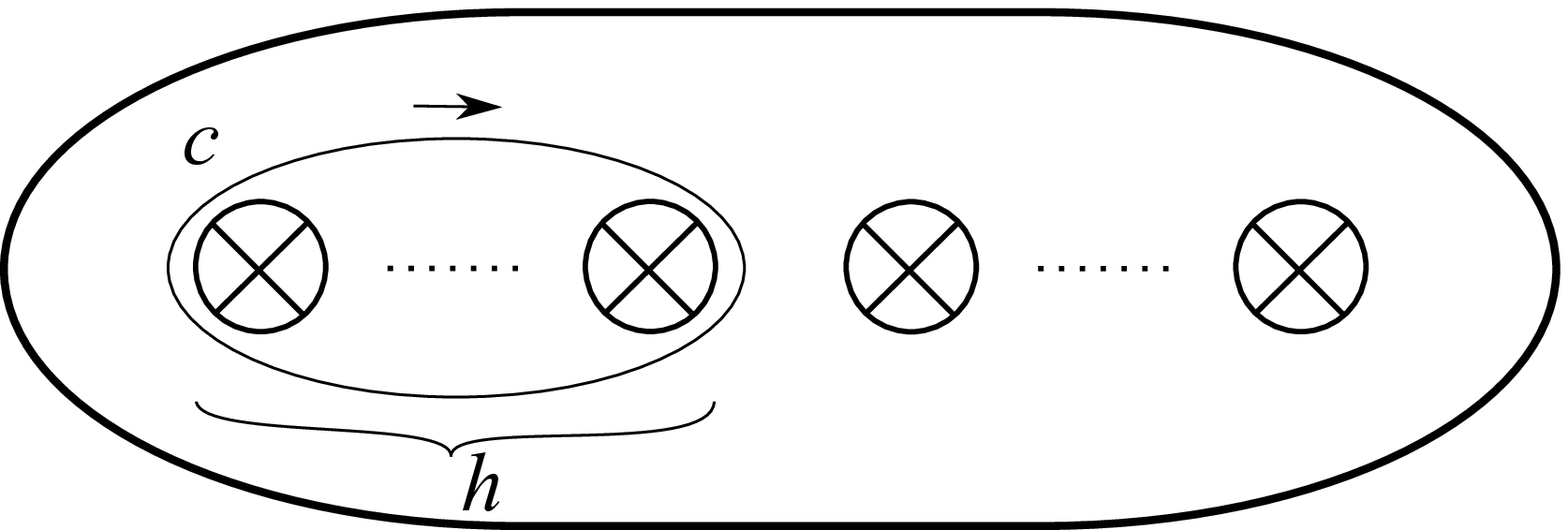}}
\subfigure[A BSCC map $t_d$ of type $(2,h)$.]{\includegraphics[scale=0.5]{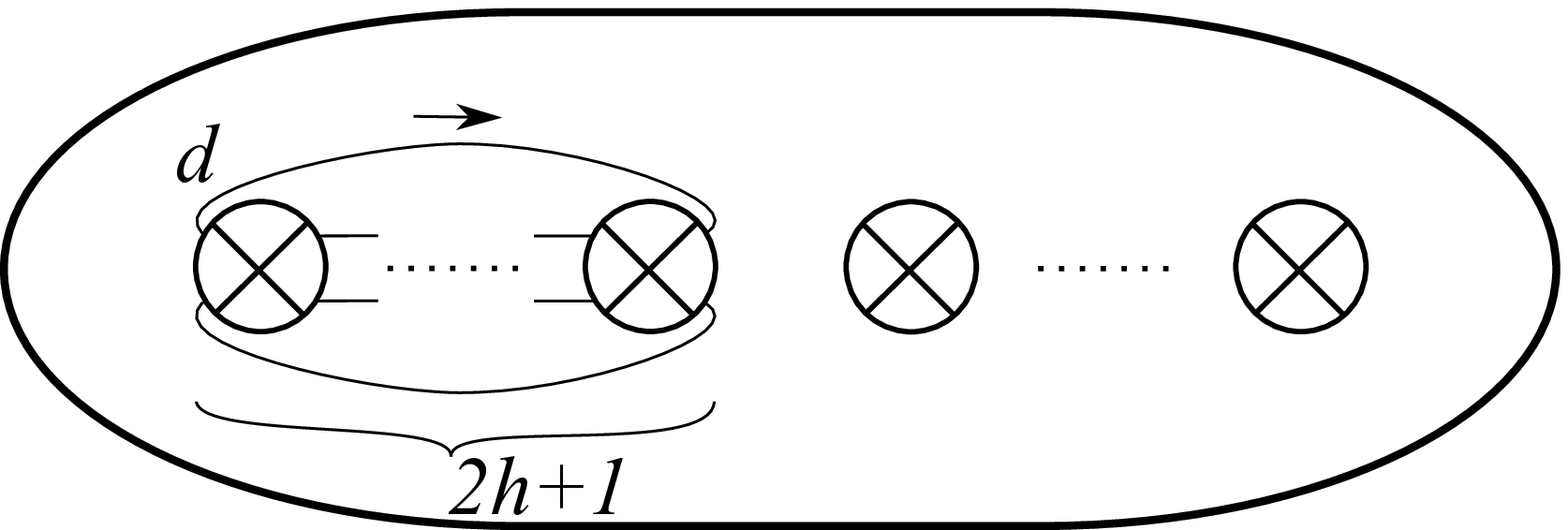}}
\subfigure[A BP map $t_{c_1}t_{c_2}^{-1}$ of type $(1,h)$.]{\includegraphics[scale=0.5]{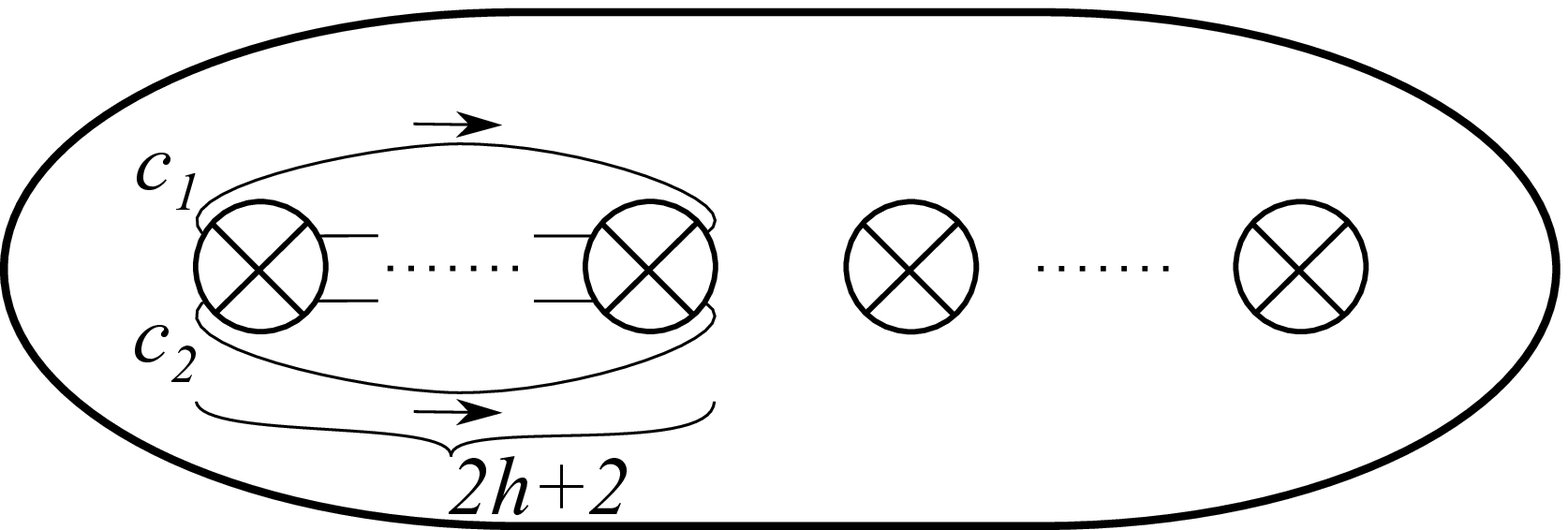}}
\subfigure[A BP map $t_{d_1}t_{d_2}^{-1}$ of type $(2,h)$.]{\includegraphics[scale=0.5]{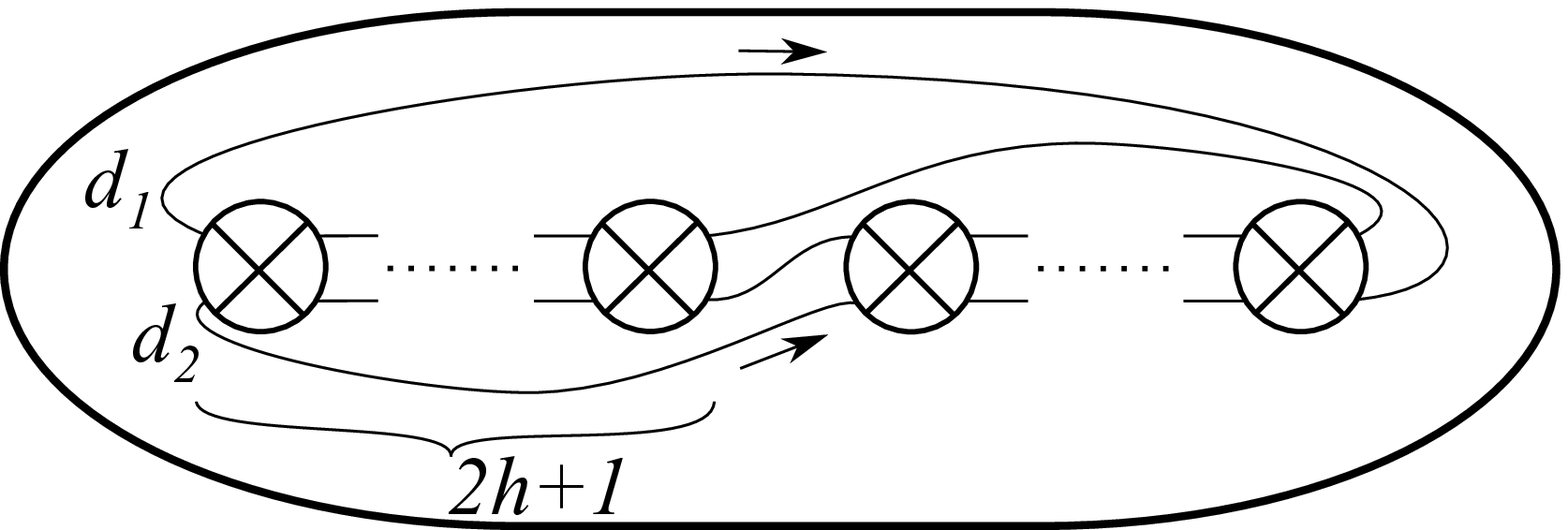}}
\caption{}\label{bsccbp}
\end{figure}

For $1\leq{i}\leq{g}$, let $\alpha_i$ be a simple closed curve on $N_g$
 as shown in Figure~\ref{alpha_i}, and let
 $c_i=[\alpha_i]\in{}H_1(N_g;\Z)$.
By a natural handle decomposition whose cores of the $1$-handles are
 $\alpha_i$, we have that $H_1(N_g;\Z)$ is generated by $c_i$, as a
 $\Z$-module (see Figure~\ref{handle}).
We can see that $t_c$, $t_d$, $t_{c_1}t_{c_2}^{-1}$ and
 $t_{d_1}t_{d_2}^{-1}$ act trivially on each $c_i$.

\begin{figure}[htbp]
\includegraphics[scale=0.5]{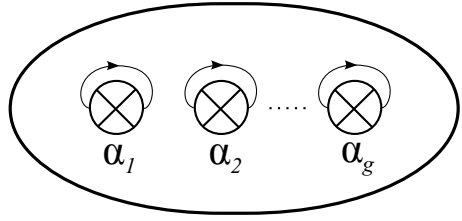}
\caption{The loops $\alpha_1,\alpha_2,\dots,\alpha_g$.}\label{alpha_i}
\end{figure}

\end{proof}

Next we prove the following.

\begin{lem}\label{lem}
For $g\geq5$, we have followings.
\begin{enumerate}
 \item \begin{enumerate}
	\item Any BSCC map of type $(2,\frac{g}{2}-1)$ in the case where
	      $g$ is even is a product of BP maps of type $(1,1)$.
	\item Any BSCC map of the other types is a product of BSCC maps
	      of type $(1,2)$.
       \end{enumerate}
 \item Any BP map of type $(1,h)$ is a product of BP maps of type
       $(1,1)$.
       Any BP map of type $(2,h)$ is a product of BP maps of type
       $(2,1)$.
\end{enumerate}
\end{lem}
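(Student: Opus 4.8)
The plan is to separate the two statements according to the type of supporting curve: assertion (2), about BP maps, is a telescoping induction, whereas assertion (1), about BSCC maps, needs a family of lantern relations and is where the case split is forced. Since the extremal BSCC case in (1)(a) will be handled by producing BP maps, I would establish (2) first and then use it inside (1).

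For (2) I argue by induction on the genus $h$ of the orientable piece cobounded by $c_1$ and $c_2$. Given $t_{c_1}t_{c_2}^{-1}$ with $c_1,c_2$ cobounding $\Sigma_h^2$, pick a curve $d$ inside $\Sigma_h^2$ splitting it into a copy of $\Sigma_1^2$ (cobounded by $c_1,d$) and a copy of $\Sigma_{h-1}^2$ (cobounded by $d,c_2$), and write $t_{c_1}t_{c_2}^{-1}=(t_{c_1}t_d^{-1})(t_d t_{c_2}^{-1})$. The factor $t_d t_{c_2}^{-1}$ has genus $h-1$ and the same outside, so induction applies; it remains to see $t_{c_1}t_d^{-1}$ is of type $(1,1)$ (type-$(1,h)$ case) or $(2,1)$ (type-$(2,h)$ case). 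The first is clear, since the non-orientable part of $N_g$ lies on the far side of $d$. For type $(2,h)$ the two sides are orientable but identified along $c_1$ and $c_2$ by an orientation-reversing gluing; the point to record is that cutting along $c_1$ \emph{and} $d$ severs one of these two identifications, so the complement of $c_1\cup d$ becomes a single surface glued along one circle and is again orientable, whence $t_{c_1}t_d^{-1}$ is genuinely of type $(2,1)$. This untwisting observation is the only non-formal ingredient of (2).

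For (1)(b) I organize BSCC maps by the subsurface bounded by $c$. When it is the non-orientable $N_h^1$ with $h\ge 3$, I embed a four-holed sphere and apply the lantern relation with two inner boundaries capped by M\"obius bands and the third by $N_{h-2}^1$: the two capping twists vanish because a twist about a curve bounding a M\"obius band is trivial, the outer curve bounds $N_h^1$, and the three interior curves bound $N_2^1$, $N_{h-1}^1$, $N_{h-1}^1$, so $t_c$ becomes one type-$(1,2)$ map times strictly lower type-$(1,\cdot)$ maps, and induction on $h$ finishes. When $c$ bounds the orientable $\Sigma_h^1$ (type $(2,h)$), I use a single \emph{mixed} lantern (attach $\Sigma_1^1$, $\Sigma_{h-1}^1$, and one M\"obius band to the three inner boundaries) to rewrite $t_c$ in terms of a type-$(2,1)$ map, a type-$(2,h-1)$ map and type-$(1,\cdot)$ maps; by induction this reduces everything to type $(2,1)$, and a final two-crosscap lantern (a one-holed torus and two M\"obius bands around the curve) rewrites the type-$(2,1)$ twist as a product of type-$(1,4),(1,3),(1,2)$ maps, handled by the previous step. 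Throughout, the one thing to verify is that every auxiliary curve really has a non-orientable, non-degenerate complement in $N_g$, i.e. that enough spare crosscaps sit outside $c$; a short Euler-characteristic count shows this is exactly the condition $g-2h\ge 3$, that is, that the map is \emph{not} of the extremal type $(2,\frac{g}{2}-1)$.

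Finally, (1)(a) covers the one configuration the conversion cannot reach: for $g$ even and $c$ bounding $\Sigma_{g/2-1}^1$ the complement is the minimal $N_2^1$, so borrowing two crosscaps leaves no non-orientable remainder and both lanterns above degenerate (the auxiliary curves bound M\"obius bands). Here I instead exploit that the orientable side has genus $g/2-1\ge 2$: $t_c$ lies in the Torelli group of $\Sigma_{g/2-1}^1$ and, by the lantern decomposition of a bounding twist in an orientable surface of genus at least two, is a product of BP maps supported near $\Sigma_{g/2-1}^1$; each such BP map cobounds an orientable subsurface while its exterior contains the $N_2^1$ and so is non-orientable, making it a type-$(1,h')$ BP map, which (2) turns into type-$(1,1)$ maps. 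The main obstacle, and the reason the statement splits into (a) and (b), is precisely this crosscap bookkeeping: one must check case by case that in the non-extremal situations the iterated lantern relations remain inside $N_g$ with all curves of the claimed types and with non-degenerate complements, while recognizing that the extremal type $(2,\frac{g}{2}-1)$ falls outside that regime and must be routed through BP maps instead.
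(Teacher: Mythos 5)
Your proposal is correct, and parts (2) and (1)(a) follow essentially the same route as the paper: the telescoping chain $t_{d_0}t_{d_h}^{-1}=(t_{d_0}t_{d_1}^{-1})\cdots(t_{d_{h-1}}t_{d_h}^{-1})$ for BP maps, and Johnson's lantern trick inside the orientable side $\Sigma_{g/2-1}^1$ (possible exactly because $g/2-1\geq2$ when $g\geq6$) to convert the extremal separating twist into three bounding pair maps whose exteriors contain the $N_2^1$ and are therefore non-orientable. Where you genuinely diverge is (1)(b). The paper does not iterate lantern relations there at all: it proves a single closed-form factorization, equation~(\ref{append(I)}), expressing the twist about a curve enclosing $h$ crosscaps as the ordered product of the $\binom{h}{2}$ twists $t_{c_{i,j}}$ about curves enclosing pairs of crosscaps, and this identity is derived in the appendix from crosscap pushing maps ($t_{c_{i,j}}=Y_{i;j}^2$ and a telescoping computation with the anti-homomorphisms $s_m$). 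That argument handles types $(1,h)$ and the non-extremal types $(2,h)$ uniformly, since both are twists about a curve surrounding at least three crosscaps, and it reuses the $Y$-homeomorphism machinery the paper needs elsewhere. Your alternative --- an induction on the number of enclosed crosscaps via lanterns with M\"obius-band-capped inner boundaries, a mixed lantern for type $(2,h)$, and a final two-crosscap lantern turning a type $(2,1)$ twist into twists of types $(1,4)$, $(1,3)$, $(1,2)$ --- also works, and has the virtue of staying close to Johnson's original orientable-surface argument; the price is the case-by-case verification you flag, that every auxiliary curve has a non-degenerate non-orientable complement (the count $g-2h\geq3$, which is precisely the non-extremality condition). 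Both routes land on the same generators, so I see no gap, only a trade-off between the paper's explicit formula and your more relation-theoretic induction.
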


\begin{proof}
In a proof, we use ideas of Johnson \cite{j}.
\begin{enumerate}
 \item \begin{enumerate}
	\item We first show that a BSCC map of type
	      $(2,\frac{g}{2}-1)$ is a product of BP maps.
	      Let $t_c$ be a BSCC map of type $(2,\frac{g}{2}-1)$.
	      Then the curve $c$ is as shown in Figure~\ref{bscc}~(a).
	      Let $x$, $y$, $z$, $a$, $b$ and $d$ be simple closed
	      curves as shown in Figure~\ref{bscc}~(a).
	      By the lantern relation, we have the relation
	      $t_dt_ct_bt_a=t_zt_yt_x$.
	      Since $a$, $b$, $c$ and $d$ are not intersect other loops,
	      we have $t_c=(t_zt_a^{-1})(t_yt_d^{-1})(t_xt_b^{-1})$.
	      Note that $t_xt_b^{-1}$, $t_yt_d^{-1}$ and $t_zt_a^{-1}$
	      are BP maps.
	      Hence a BSCC map of type $(2,\frac{g}{2}-1)$ is a product
	      of BP maps.
	      As we will show in the assertion (2), these BP maps are
	      products of BP maps of type $(1,1)$.
	      Hence we obtain the claim.
	\item Let $t_c$ be a BSCC map of type $(1,h)$ or
	      $(2,\frac{g-h}{2})$ for $h\geq3$, then $c$ is as shown in
	      Figure~\ref{bscc}~(b).
	      let $c_{i,j}$ be a simple closed curve for
	      $1\leq{i<j}\leq{h}$ as shown in Figure~\ref{bscc}~(b).
	      We have
	      \begin{equation}\label{append(I)}
		t_c=\prod_{1\leq{i}\leq{h-1}}(t_{c_{i,i+1}}t_{c_{i,i+2}}\cdots{t_{c_{i,h-1}}t_{c_{i,h}}}).\tag{I}
	      \end{equation}
	      The equation (\ref{append(I)}) will be shown in
	      Appendix~\ref{appendix}.
	      Since each $t_{c_{i,j}}$ is a BSCC map of type $(1,1)$, we
	      obtain the claim.
	      \begin{figure}[htbp]
	       \subfigure[Loops $a,b,c,d$ and $x,y,z$.]{\includegraphics[scale=0.5]{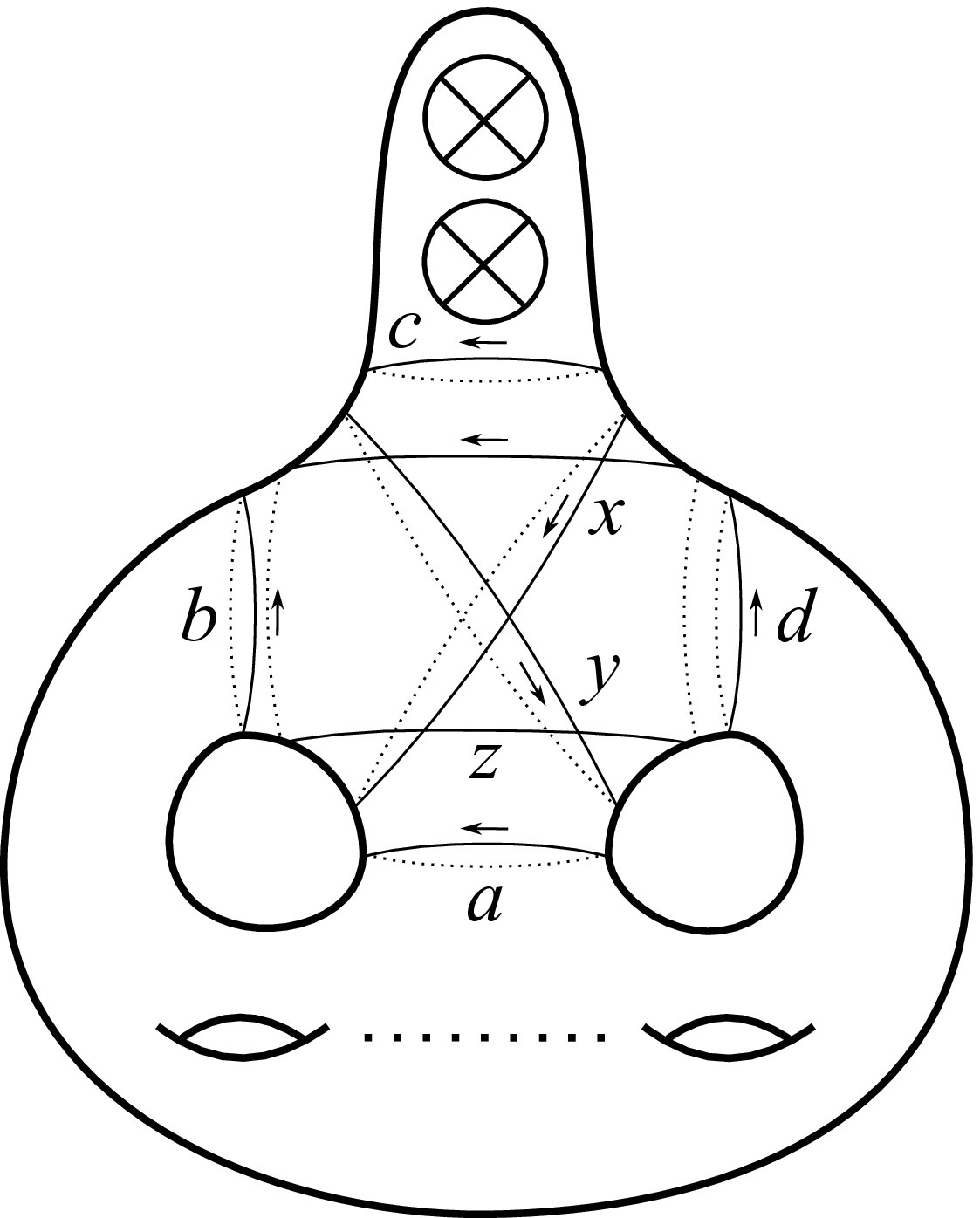}}\hspace{0.5cm}
	       \subfigure[Loops $c$ and $c_{i,j}$.]{\includegraphics[scale=0.5]{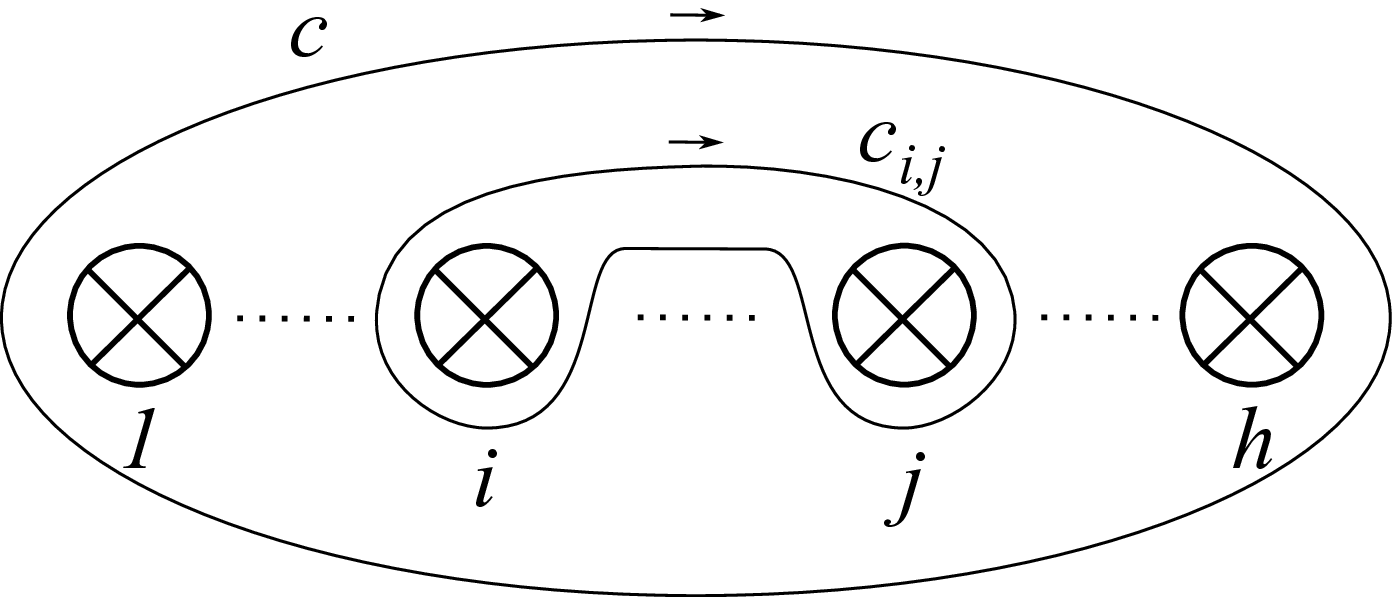}}
	       \caption{}\label{bscc}
	      \end{figure}
       \end{enumerate}
 \item For $h\geq1$, let $d_0,d_1,\dots,d_h$ be simple closed curves as
       shown in Figure~\ref{bp}.
       Suppose that $d_0$ and $d_h$ are not separating curves.
       Note that $t_{d_0}t_{d_h}^{-1}$ is a BP map of type $(1,h)$ or
       $(2,h)$.
       Then we have the equation
       $t_{d_0}t_{d_h}^{-1}=(t_{d_0}t_{d_1}^{-1})(t_{d_1}t_{d_2}^{-1})\cdots(t_{d_{h-1}}t_{d_h}^{-1})$.
       Since each $t_{d_i}t_{d_{i+1}}^{-1}$ is a BP map of type $(1,1)$
       or $(2,1)$, we obtain the claim.
       \begin{figure}[htbp]
	\includegraphics[scale=0.5]{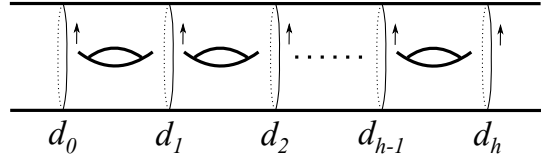}
	\caption{Loops $d_0,d_1,\dots,d_h$.}\label{bp}
       \end{figure}
\end{enumerate}
\end{proof}

\section{Preliminaries}\label{preliminary}

\subsection{On generators for $\g(N_g)$}\

McCarthy, Pinkall \cite{mp} and Gadgil, Pancholi \cite{gp} proved that
the natural homomorphism $\rho_2:\M(N_g)\to\Az$ is surjective.
Szepietowski \cite{s2} proved that $\g(N_g)$ is generated by
$Y$-homeomorphisms, and that $\g(N_g)$ is generated by involutions.
Therefore, $H_1(\g(N_g);\Z)$ is a $\Z/{2\Z}$-module.
The first author and Sato \cite{hs} showed that $H_1(\g(N_g);\Z)$ is the
$\Z/{2\Z}$-module of the rank $\binom{g}{3}+\binom{g}{2}$.

For $I=\{i_1,i_2,\dots,i_k\}\subset\{1,2,\dots,g\}$, we define an
oriented simple closed curve $\alpha_I$ as shown in Figure~\ref{loopI}.
For short, we denote $\alpha_{\{i\}}$ by $\alpha_i$.
We define $Y_{i_1;i_2,\dots,i_k}=Y_{\alpha_{i_1},\alpha_I}$,
$T_{i_1,\dots,i_k}=t_{\alpha_I}$ if $k$ is even.

\begin{figure}[htbp]
\includegraphics[scale=0.5]{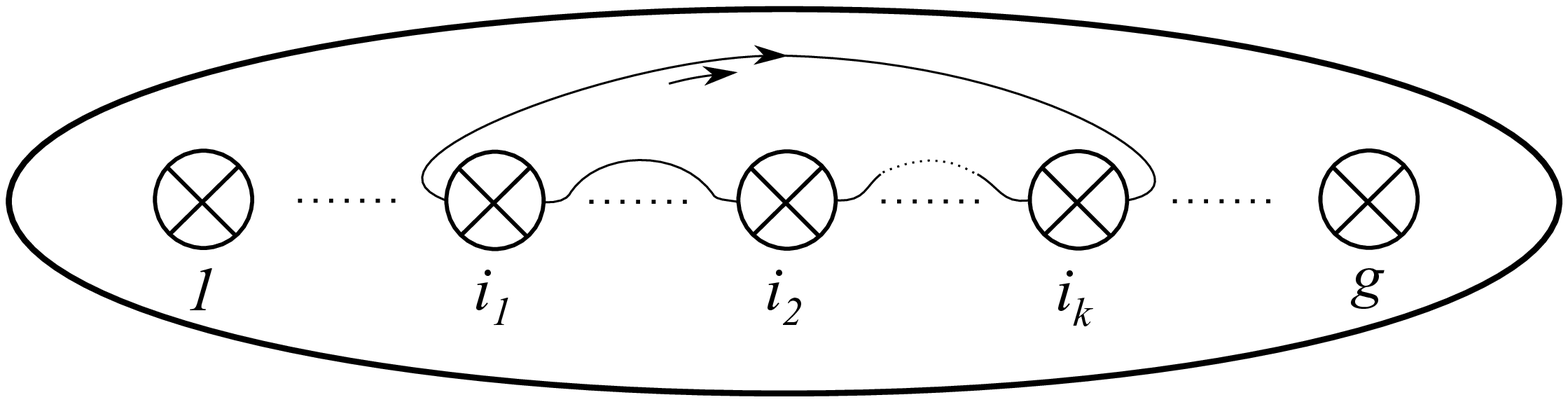}
\caption{The curve $\alpha_I$ for $I=\{i_1,i_2,\dots,i_k\}$.}\label{loopI}
\end{figure}

Szepietowski \cite{s3} gave a generating set for $\g(N_g)$ as follows.

\begin{thm}[\cite{s3}]
For $g\geq4$, $\g(N_g)$ is generated by the following elements.
\begin{enumerate}
 \item $Y_{i;j}$ for $1\leq{i}\leq{g-1}$, $1\leq{j}\leq{g}$ and
       $i\neq{j}$,
 \item $T_{i,j,k,l}^2$ for $1\leq{i<j<k<l}\leq{g}$.
\end{enumerate}
\end{thm}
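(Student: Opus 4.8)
The plan is to promote Szepietowski's result from \cite{s2}, that $\g(N_g)$ is generated by the (infinitely many) $Y$-homeomorphisms, to the finite explicit list above. Write $H$ for the subgroup of $\g(N_g)$ generated by the $Y_{i;j}$ and the $T_{i,j,k,l}^2$. That $H\subseteq\g(N_g)$ is immediate: each $Y_{i;j}$ acts trivially on $H_1(N_g;\Z/2\Z)$, while $T_{i,j,k,l}^2=t_{\alpha_I}^2$ acts by the square of a transvection, which is the identity over $\Z/2\Z$. So the content is the reverse inclusion $\g(N_g)\subseteq H$.

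The key reduction is that $\g(N_g)$ is the normal closure in $\M(N_g)$ of a single $Y$-homeomorphism. Indeed, conjugation in $\M(N_g)$ carries $Y$-homeomorphisms to $Y$-homeomorphisms, so the subgroup they generate is normal; and by the change-of-coordinates principle a regular neighborhood of $m\cup a$ is always a copy of $N_2^1$ with complement $N_{g-2}^1$, so every $Y_{m,a}$ is conjugate in $\M(N_g)$ to $Y_{1;2}^{\pm1}$. Hence, by \cite{s2}, $\g(N_g)$ is the normal closure of $Y_{1;2}$. Since $Y_{1;2}$ is itself one of the listed generators, it now suffices to prove that $H$ is normal in $\M(N_g)$: normality forces $H\supseteq\g(N_g)$, and combined with $H\subseteq\g(N_g)$ we conclude $H=\g(N_g)$.

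To prove normality I would fix a finite generating set of $\M(N_g)$ — for instance the Dehn twists about the $A$-circles $\alpha_{\{i,i+1\}}$ together with one $Y$-homeomorphism, after Lickorish \cite{l1} and Chillingworth \cite{c} — and check that conjugating each generator of $H$ by each of these generators and their inverses returns an element of $H$. For a twist-square this is transparent in principle, since $t_c\,t_{\alpha_I}^2\,t_c^{-1}=t_{t_c(\alpha_I)}^2$, and one reduces the image curve to a standard $\alpha_J$ by braid and lantern relations together with a further change of coordinates. For a conjugate $t_c\,Y_{i;j}\,t_c^{-1}$ one obtains a $Y$-homeomorphism supported on a shifted configuration, which must be rewritten as a product of standard $Y_{i;j}$; the discrepancies that arise are absorbed using the relation $Y_{m,a}^2=t_{\partial K}$ and the commutation of disjointly supported maps, and it is exactly here that the twist-squares are consumed. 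This also explains why the $T_{i,j,k,l}^2$ must be adjoined: the pure subgroup $\la Y_{i;j}\ra$ need not be closed under conjugation, and the twist-squares are the correction terms that restore normality.

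I expect the computational step of the previous paragraph to be the main obstacle: controlling the conjugate of a non-standard $Y$-homeomorphism and putting it into normal form. I would organize this as an inductive ``staircase'' reduction on the index set, sliding the supporting $M$- and $A$-circles one crosscap at a time and absorbing each step into a standard $Y_{i;j}$ or a $T_{i,j,k,l}^2$, with the constrained low-genus cases — notably $g=4$, where the complement of $K$ is itself only a Klein bottle with a hole — verified directly.
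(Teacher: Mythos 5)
This theorem is quoted by the paper from Szepietowski's \cite{s3} and is not proved here, so the only available comparison is with his argument; judged on its own terms, your plan has two genuine gaps. The first is the claim that every $Y_{m,a}$ is conjugate in $\M(N_g)$ to $Y_{1;2}^{\pm1}$ because ``a regular neighborhood of $m\cup a$ is always a copy of $N_2^1$ with complement $N_{g-2}^1$.'' The neighborhood is indeed always $N_2^1$, but its complement is a surface with one boundary component and Euler characteristic $3-g$, which can be either $N_{g-2}^1$ or, when $g$ is even, $\Sigma_{(g-2)/2}^1$; both actually occur (in $N_4\cong\Sigma_1\#N_2$ take $m\cup a$ filling the $N_2$ summand, so that the complement is $\Sigma_1^1$). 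Hence for even $g$ --- including $g=4$, which the statement covers --- there are two $\M(N_g)$-conjugacy classes of $Y$-homeomorphisms, and ``$\g(N_g)$ is the normal closure of $Y_{1;2}$'' does not follow from \cite{s2} plus change of coordinates. To salvage the reduction you would have to separately express the crosscap slides with orientable complement as products of the other kind, which is itself a nontrivial piece of the theorem.

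The second gap is that, even granting the reduction, essentially all of the mathematical content is deferred to the step you yourself call ``the main obstacle.'' Showing that $t_c\,Y_{i;j}\,t_c^{-1}$ and $t_c\,T_{i,j,k,l}^2\,t_c^{-1}$ lie in $H$ is not a routine verification to be ``absorbed using $Y_{m,a}^2=t_{\partial K}$ and commutation''; rewriting a crosscap slide along a non-standard configuration as a word in the $Y_{i;j}$ and $T_{i,j,k,l}^2$ is exactly what the theorem asserts. The working machinery for this --- visible in Section~\ref{proofthm} and Appendix~\ref{appendix} of the present paper --- is the crosscap pushing map $s_i:\pi_1(N_{g-1},*)\to\M(N_g)$: since $s_i$ is an anti-homomorphism and $\pi_1(N_{g-1},*)$ is generated by the $[\gamma_j]$, any crosscap slide $Y_{\alpha_i,\beta}=s_i([\beta])$ along a standard $M$-circle $\alpha_i$ is automatically a word in the $Y_{i;j}^{\pm1}$ (equation~(\ref{append(II)}) is an instance, and is precisely how the ``missing'' generators $Y_{g;i}$ are recovered); the remaining induction reduces an arbitrary $Y_{m,a}$ to that situation, and it is there that the $T_{i,j,k,l}^2$ are genuinely consumed. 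As written, your text correctly guesses the role of the twist-squares but does not demonstrate it, so the proposal is a plausible outline rather than a proof.
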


In addition, the first author and Sato \cite{hs} gave a minimal
generating set for $\g(N_g)$ as follows.

\begin{thm}[\cite{hs}]\label{hs}
For $g\geq 4$, $\g(N_g)$ is generated by the following elements.
\begin{enumerate}
 \item $Y_{i;j}$ for $1\leq{i}\leq{g-1}$, $1\leq{j}\leq{g}$ and
       $i\neq{j}$,
 \item $T_{1,j,k,l}^2$ for $1<j<k<l\leq{g}$.
\end{enumerate}
\end{thm}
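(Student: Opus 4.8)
The plan is to establish the statement in two stages: first minimality, which is essentially a rank count, and then generation, which is the substantive part. For minimality, I would invoke the fact recalled just above that $H_1(\g(N_g);\Z)$ is a $\Z/{2\Z}$-module of rank $\binom{g}{3}+\binom{g}{2}$. For any finitely generated group whose integral first homology is a $\Z/{2\Z}$-vector space of dimension $d$, the images of a generating set must span $H_1$, so every generating set has at least $d$ elements. A direct count shows the proposed set consists of $(g-1)^2$ crosscap slides $Y_{i;j}$ (namely $g-1$ choices of $i$ and $g-1$ choices of $j\neq i$) together with $\binom{g-1}{3}$ elements $T_{1,j,k,l}^2$ (three-element subsets of $\{2,\dots,g\}$). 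The elementary identity $(g-1)^2+\binom{g-1}{3}=\binom{g}{3}+\binom{g}{2}$ then shows the cardinality equals the rank, so once the set is shown to generate, minimality is automatic.

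For generation, I would start from Szepietowski's generating set recalled above, which contains every $Y_{i;j}$ and every $T_{i,j,k,l}^2$ with $1\le i<j<k<l\le g$. Let $H$ denote the subgroup generated by the proposed elements. Since all the $Y_{i;j}$ and those $T_{i,j,k,l}^2$ with smallest index $1$ already lie in $H$, it suffices to show that each $T_{i,j,k,l}^2$ with $i\ge2$ lies in $H$. For such an index set one automatically has $1\notin\{j,k,l\}$, so the task is to replace the smallest index $i$ by $1$. The mechanism is conjugation: for any homeomorphism $\phi$ one has $\phi\,t_{\alpha_{\{1,j,k,l\}}}^2\,\phi^{-1}=t_{\phi(\alpha_{\{1,j,k,l\}})}^{\pm2}$, and the sign is immaterial because $H$ is a subgroup. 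Hence, if I can exhibit $\phi\in H$ carrying $\alpha_{\{1,j,k,l\}}$ to $\alpha_{\{i,j,k,l\}}$ up to isotopy, then $T_{i,j,k,l}^2=\phi\,T_{1,j,k,l}^2\,\phi^{-1}\in H$, completing the reduction.

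The hard part is producing this conjugating element inside $H$. A bare transposition of the crosscaps in positions $1$ and $i$ would accomplish the required curve map geometrically, but it interchanges the classes of $\alpha_1$ and $\alpha_i$ in $H_1(N_g;\Z/{2\Z})$ and so does not even lie in $\g(N_g)$. The way around this is to use only crosscap slides $Y_{a;b}$, each of which acts trivially on $H_1(N_g;\Z/{2\Z})$, and to assemble from them a product whose net effect slides the crosscap in position $1$ past the intermediate crosscaps $2,\dots,i-1$ to position $i$ while fixing the crosscaps indexed by $j,k,l$. Concretely I would track the induced map on curves for such a composite, verify that it sends $\alpha_{\{1,j,k,l\}}$ to $\alpha_{\{i,j,k,l\}}$, and confirm that it is expressible in the slides $Y_{a;b}$ appearing in our list (or, alternatively, realize the same reduction through explicit crosscap-slide and lantern-type relations among the $T^2$'s). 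This bookkeeping — pinning down the exact product of crosscap slides and checking its action on the relevant curve, all while staying inside the level $2$ group — is the technical core and the step I expect to be the main obstacle.
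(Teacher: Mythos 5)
This theorem is not proved in the paper at all: it is quoted from \cite{hs}, so there is no internal argument to compare yours against. Your minimality count is correct but irrelevant here (the statement only asserts generation), and it is the generation step that contains a fatal gap. The conjugating element $\phi$ you are looking for does not exist. Every proposed generator lies in $\g(N_g)$, hence so does every element of the subgroup $H$ they generate; and by definition every element of $\g(N_g)$ fixes each class in $H_1(N_g;\Z/{2\Z})\cong(\Z/{2\Z})^g$. Since $[\alpha_{\{1,j,k,l\}}]=c_1+c_j+c_k+c_l$ and $[\alpha_{\{i,j,k,l\}}]=c_i+c_j+c_k+c_l$ are distinct mod $2$ classes whenever $i\geq2$, no $\phi\in H$ (indeed no $\phi\in\g(N_g)$) can carry $\alpha_{\{1,j,k,l\}}$ to $\alpha_{\{i,j,k,l\}}$, so $\phi\,T_{1,j,k,l}^2\,\phi^{-1}=t_{\phi(\alpha_{\{1,j,k,l\}})}^{\pm2}$ can never equal $T_{i,j,k,l}^2$. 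The obstruction you noticed for the ``bare transposition'' is not special to that choice: it is homological and rules out every candidate, so the ``technical core'' you deferred is not bookkeeping but an impossibility.

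The reduction from Szepietowski's set to the set in Theorem~\ref{hs} has to be done by rewriting rather than by conjugation: one exhibits explicit relations in $\g(N_g)$ expressing $T_{i,j,k,l}^2$ with $i\geq2$ as a word in the $Y_{a;b}$ and the $T_{1,*,*,*}^2$. The kind of relation needed is visible in Section~\ref{norgen} of this paper, where $T_{i,j,k,l}^{-1}T_{i,j,k,l}'^{-1}$ is written as a product of crosscap slides $Y_{\alpha_m,\beta_{m,*}}$ (so that, modulo such products and BP maps, squared twists about different curves $\alpha_I$ can be compared); this, not a change of coordinates inside the level $2$ group, is the mechanism used in \cite{hs}.
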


\subsection{On $\ker\Phi_g$ and $\g(g-1)$}\label{3.2}\

McCarthy and Pinkall claimed that $\ker\Phi_g$ is isomorphic to
$\g(g-1)$ in their unpublished preprint \cite{mp}.
In this subsection, we refer their result and proof.

Let $c_i=[\alpha_i]\in{H_1(N_g;\Z)}$ for $1\leq{i}\leq{g}$, and let
$c=c_1+c_2+\cdots+c_g(=[\alpha_{\{1,\dots,g\}}])$.
Then, by a natural handle decomposition as shown in Figure~\ref{handle},
as a $\Z$-module, $H_1(N_g;\Z)$ has a presentation
$$H_1(N_g;\Z)=\la{c_1,c_2,\dots,c_g\mid2c=0}\ra.$$
As a $\Z$-module we have
\begin{eqnarray*}
H_1(N_g;\Z)/{\la{c}\ra}
&=&\la{c_1,c_2,\dots,c_g\mid{c=0}}\ra\\
&=&\la{c_1,c_2,\dots,c_{g-1}}\ra\\
&\cong&\Z^{g-1},
\end{eqnarray*}
where we settle that the last isomorphism sends $c_i$ to the $i$-th
canonical normal vector $e_i$ for $1\leq{i}\leq{g-1}$.
For $x\in{H_1(N_g;\Z)}$, we denote by $\overline{x}$ the image of $x$ by
the projection $H_1(N_g;\Z)\to\Z^{g-1}$.
Explicitly, for $x=\sum_{j=1}^{g}x_jc_j\in{H_1(N_g;\Z)}$, we have
$\overline{x}=\sum_{j=1}^{g-1}(x_j-x_g)e_j\in\Z^{g-1}$.
We regard ${\rm Aut}(H_1(N_g;\Z)/{\la{c}\ra})$ as $GL(g-1;\Z)$.

\begin{figure}[htbp]
\includegraphics[scale=0.5]{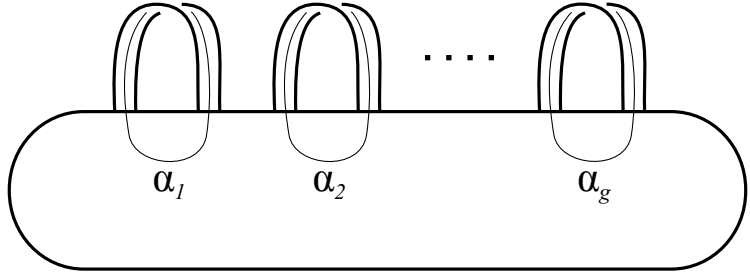}
\caption{A handle decomposition of $N_g$ whose cores of the $1$-handles
 are $\alpha_i$.}\label{handle}
\end{figure}

For $L\in{\rm Aut}(H_1(N_g;\Z))$, since $2L(c)=L(2c)=0$ and $c$ is the
only non-trivial element of $H_1(N_g;\Z)$ satisfying $2c=0$, we have
$L(c)=c$.
Hence $L\in{\rm Aut}(H_1(N_g;\Z))$ induces $\overline{L}\in{GL(g-1;\Z)}$.
More precisely, $\overline{L}$ is defined as
$\overline{L}(e_i)=\overline{L(c_i)}$.
By this correspondence, we obtain the following.

\begin{prop}
The correspondence $f:\ker\Phi_g\to\g(g-1)$ defined by
 $f(L)=\overline{L}$ is an isomorphism.
\end{prop}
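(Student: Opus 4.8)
The plan is to realize $\ker\Phi_g$ concretely and then verify in turn that $f$ is a well-defined homomorphism into $\g(g-1)$, that it is injective, and that it is surjective. The first observation I would record is that the mod $2$ intersection form $\cdot$ factors through the reduction $H_1(N_g;\Z)\to H_1(N_g;\Z/{2\Z})$, so an automorphism $L$ of $H_1(N_g;\Z)$ preserves $\cdot$ precisely when its mod $2$ reduction does. In particular any $L$ that reduces to the identity automatically preserves $\cdot$, whence
\[
\ker\Phi_g=\{L\in{\rm Aut}(H_1(N_g;\Z))\mid L\equiv\mathrm{id}\pmod 2\}.
\]
Since $L(c)=c$ has already been established, such an $L$ descends to $\overline{L}\in GL(g-1;\Z)$, and functoriality of the quotient (equivalently $\overline{L}(\overline{x})=\overline{L(x)}$) makes $f$ a group homomorphism. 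To see that the image lands in $\g(g-1)$, I would write $L(c_i)=\sum_{j=1}^{g}a_{ij}c_j$; the congruence $L\equiv\mathrm{id}\pmod 2$ forces $a_{ij}\equiv\delta_{ij}\pmod 2$, and feeding this into the formula $\overline{L}(e_i)=\sum_{j=1}^{g-1}(a_{ij}-a_{ig})e_j$ gives $\overline{L}\equiv\mathrm{id}\pmod 2$, using $a_{ig}\equiv0$ for $i\le g-1$.

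For injectivity, suppose $\overline{L}=\mathrm{id}$. Then for each $i\le g-1$ the difference $L(c_i)-c_i$ lies in $\la c\ra=\{0,c\}$, so $L(c_i)=c_i+\delta_i c$ with $\delta_i\in\{0,1\}$. Here the torsion class intervenes: reducing mod $2$ and using $L\equiv\mathrm{id}\pmod 2$ gives $\delta_i\,\overline{c}=0$ in $H_1(N_g;\Z/{2\Z})$, and since $\overline{c}=\overline{c_1}+\cdots+\overline{c_g}\neq0$ we conclude $\delta_i=0$. Hence $L(c_i)=c_i$ for $i\le g-1$, and then $L(c_g)=L(c)-\sum_{i\le g-1}L(c_i)=c_g$, so $L=\mathrm{id}$.

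For surjectivity, I would use the direct sum decomposition $H_1(N_g;\Z)=F\oplus\la c\ra$, with $F$ the free abelian group on $c_1,\dots,c_{g-1}$, under which the projection to $\Z^{g-1}$ restricts to an isomorphism on $F$. Given $A\in\g(g-1)$, define $L$ to act as $A$ on $F$ (via this identification) and to fix $c$; this is an automorphism of $H_1(N_g;\Z)$ with $\overline{L}=A$. It only remains to check $L\equiv\mathrm{id}\pmod2$ on all of $H_1(N_g;\Z/{2\Z})$, which for $c_1,\dots,c_{g-1}$ is immediate from $A\equiv I\pmod2$ and for the redundant generator $c_g=c-(c_1+\cdots+c_{g-1})$ follows since $L$ fixes $c$; form-preservation is then automatic by the first paragraph, so $L\in\ker\Phi_g$ and $f(L)=A$. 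The main obstacle throughout is the $2$-torsion class $c$: because $c$ has order two but is nonzero mod $2$, one must use it carefully both to remove the $\la c\ra$-ambiguity in injectivity and to control the dependent generator $c_g$ in the surjectivity lift.
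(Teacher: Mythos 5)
Your proof is correct and follows essentially the same route as the paper: the same homomorphism check, the same injectivity argument via the ambiguity $L(c_i)\in\{c_i,c_i+c\}$ resolved by the mod $2$ condition, and a surjectivity lift that (phrased through the splitting $H_1(N_g;\Z)=F\oplus\la{c}\ra$) produces exactly the paper's explicit $\widetilde{A}$. Your added observation that preservation of the mod $2$ intersection form is automatic for any $L\equiv\mathrm{id}\pmod 2$ usefully makes explicit a step the paper leaves implicit when concluding $\widetilde{A}\in\ker\Phi_g$.
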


\begin{proof}
We first show that $f(\ker\Phi_g)$ is in $\g(g-1)$ and $f$ is a
 homomorphism.
By the definition of $\Phi_g$, we have that $L(c_i)\equiv{c_i}\mod2$ for
$1\leq{i}\leq{g}$.
Hence $\overline{L}(e_i)=\overline{L(c_i)}\equiv\overline{c_i}=e_i\mod2$
 for $1\leq{i}\leq{g-1}$.
Therefore we have $f(L)\in\g(g-1)$.
In addition, for $L,L'\in\ker\Phi_g$, we see
\begin{eqnarray*}
\overline{LL'}(e_i)
&=&\overline{LL'(c_i)}\\
&=&\overline{L(L'(c_i))}\\
&=&\overline{L}(\overline{L'(c_i)})\\
&=&\overline{L}(\overline{L'}(e_i)).
\end{eqnarray*}
Thus, $f$ is a homomorphism.

We next show the injectivity of $f$.
For $L\in\ker\Phi_g$, suppose that $\overline{L}$ is the identity.
Then we have either $L(c_i)=c_i$ or $L(c_i)=c_i+c$.
By the definition of $\Phi_g$, we have that $L(c_i)\equiv{c_i}\mod2$ for 
$1\leq{i}\leq{g}$.
Hence $L$ is the identity.
Therefore $f$ is injective.

Finally we show the surjectivity of $f$.
For any $A=(a_{ij})\in\g(g-1)$, we define $\widetilde{A}\in\ker\Phi_g$
 to be 
\begin{eqnarray*}
\widetilde{A}(c_i)&=&
\left\{
\begin{array}{ll}
\sum_{j=1}^{g-1}a_{ji}c_j&(i\neq{g}),\\
\sum_{j=1}^{g-1}(1-\sum_{k=1}^{g-1}a_{jk})c_j+c_g&(i=g).
\end{array}
\right.
\end{eqnarray*}
Then we have that $\widetilde{A}(c)=c$ and,
since $a_{ii}$ is odd and $a_{ij}$ is even for $i\neq{j}$, 
$\widetilde{A}(c_i)\equiv{c_i}\mod2$.
Hence we have $\widetilde{A}\in\ker\Phi_g$.
In addition, we have $f(\widetilde{A})=A$.
Therefore $f$ is surjective.

Thus $f$ is an isomorphism.
\end{proof}

\subsection{On a presentation for $\g(g-1)$}\

For $1\leq{i,j}\leq{n}$ with $i\neq{j}$, let $E_{ij}$ denote the matrix
whose $(i,j)$ entry is $2$, diagonal entries are $1$ and others are $0$,
and let $F_i$ denote the matrix whose $(i,i)$ entry is $-1$, other
diagonal entries are $1$ and others are $0$.
It is known that $\g(n)$ is generated by $E_{ij}$ and $F_i$ (see
\cite{mp}).
In addition, a finite presentation for $\g(n)$ was independently given by
Fullarton \cite{f}, the second author \cite{k}, Margalit and Putman
recently.

\begin{thm}[cf. \cite{f}, \cite{k}]\label{kobayashi}
For $n\geq1$, $\g(n)$ has a finite presentation with generators
 $E_{ij}$ and $F_i$, for $1\leq{i,j}\leq{n}$, and with relators 
\begin{enumerate}
 \item $F_i^2 $,
 \item $(E_{ij}F_i)^2$,
       $(E_{ij}F_j)^2$,
       $(F_iF_j)^2$ (when $n\geq2$),
 \item \begin{enumerate}
	\item $[E_{ij},E_{ik}]$,
	      $[E_{ij},E_{kj}]$,
	      $[E_{ij},F_k ]$,
	      $[E_{ij},E_{ki}]E_{kj}^2$ 
	      (when $n\geq3$),
	\item $(E_{ji}E_{ij}^{-1}E_{kj}^{-1}E_{jk}E_{ik}E_{ki}^{-1})^2$
	      for $i<j<k$ (when $n\geq3$),
       \end{enumerate}
 \item $[E_{ij},E_{kl}]$ (when $n\geq4$),
\end{enumerate}
where $[X,Y]=X^{-1}Y^{-1}XY$ and $1\leq{i,j,k,l}\leq{n}$ are mutually different.
\end{thm}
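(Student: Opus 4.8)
The plan is to derive the presentation from a known finite presentation of the ambient group $GL(n;\Z)$ by the Reidemeister--Schreier method, since $\g(n)$ sits inside $GL(n;\Z)$ as the kernel of reduction modulo $2$ and is therefore a subgroup of finite index $|GL(n;\Z/2\Z)|$. First I would fix a finite presentation of $GL(n;\Z)$ whose generators are the elementary transvections $I+e_{ij}$ (the matrix with a single $1$ in the off-diagonal slot $(i,j)$) together with the sign changes $F_i$, subject to the classical Steinberg relations and the relations governing the $F_i$; these generate $GL(n;\Z)$ because the transvections generate $SL(n;\Z)$ and one sign change supplies the coset of determinant $-1$. Next I would choose a Schreier transversal for $\g(n)$, that is, a prefix-closed set of coset representatives indexed by $GL(n;\Z/2\Z)$, and run the rewriting process. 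This produces a (large) presentation of $\g(n)$ whose generators are the nontrivial Schreier generators $t\,x\,\overline{tx}^{-1}$ and whose relators are the rewritten lifts of the relators of $GL(n;\Z)$.

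The second step is to identify this raw presentation with the stated one up to Tietze equivalence, and the easy half is to check that each listed relator genuinely holds in $\g(n)$ by direct matrix computation. For example $F_i^2=I$ and $(F_iF_j)^2=I$ because the $F_i$ are commuting diagonal involutions; $[E_{ij},E_{kl}]=I$ for mutually distinct indices because the relevant matrix units multiply to zero; and the Steinberg-type relation is obtained from $e_{ki}e_{ij}=e_{kj}$ together with $e_{ij}e_{ki}=0$, which gives $[E_{ij},E_{ki}]=I-4e_{kj}=E_{kj}^{-2}$, i.e. $[E_{ij},E_{ki}]E_{kj}^2=I$. The type (2) relations similarly record that conjugation of $E_{ij}$ by $F_i$ or by $F_j$ inverts it. Since each $E_{ij}=I+2e_{ij}$ is exactly the square of the transvection $I+e_{ij}$, these elements arise naturally as images of the Schreier generators attached to the cosets of the $I+e_{ij}$.

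The real work, and the main obstacle, is the converse completeness statement: that the listed relators already suffice. Concretely I would take the $E_{ij}$ and $F_i$ as the surviving generators, eliminate every other Schreier generator one at a time by Tietze transformations, and track how the rewritten Steinberg and sign-change relators collapse onto relations (1)--(4). The delicate point is relator (3b), the order-two condition on the hexagonal word $E_{ji}E_{ij}^{-1}E_{kj}^{-1}E_{jk}E_{ik}E_{ki}^{-1}$ for $i<j<k$, which encodes the image in $\g(n)$ of the braid/Weyl relation among three index directions; verifying that a single such relation (rather than a longer family) is what remains after the reduction is the demanding bookkeeping step, and it is where an error is most likely to hide. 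As a fallback to the purely combinatorial reduction I would keep an inductive argument on $n$ in reserve: realize $\g(n)$ as an extension of $\g(n-1)$ determined by the stabilizer of a primitive vector congruent to $e_n$ modulo $2$, compute a presentation of the corresponding orbit complex, and assemble the presentation of $\g(n)$ from that of $\g(n-1)$ together with this extension datum, trading the global Tietze calculation for a base case plus one extension computation.
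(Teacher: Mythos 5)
The paper itself does not prove this theorem: it is imported wholesale from \cite{f} and \cite{k} (and unpublished work of Margalit and Putman), so there is no internal proof to compare against. Judged on its own terms, your proposal establishes only the easy half. The matrix verifications that the listed words really are relators of $\g(n)$ are correct (e.g.\ $[E_{ij},E_{ki}]=I-4e_{kj}=E_{kj}^{-2}$, and $F_iE_{ij}F_i=E_{ij}^{-1}$), but that direction was never in doubt. The substantive content of the theorem is completeness --- that these relators normally generate \emph{all} relations among the $E_{ij}$ and $F_i$ --- and for that you offer only the name of a strategy, not an argument. You yourself flag the reduction to relator (3)(b) as ``where an error is most likely to hide,'' which is an admission that the decisive step has not been carried out.

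Moreover, the strategy as described is not realistically executable and contains a gap at its very first step. Reidemeister--Schreier applied to $\g(n)\leq GL(n;\Z)$ needs a transversal of size $|GL(n;\Z/2\Z)|$, which is already $20160$ for $n=4$ and grows super-exponentially; the rewritten presentation then has on the order of (index)$\times$(number of relators) relators, and ``eliminating every other Schreier generator one at a time'' is not a computation one can certify by hand, nor is it how the cited sources proceed (they use actions on suitable simply connected complexes or induct via stabilizers of vectors --- essentially your fallback, which you also leave unexecuted). Separately, the input presentation you propose for $GL(n;\Z)$ is incomplete: the Steinberg relations present the Steinberg group, which surjects onto $SL(n;\Z)$ with kernel $K_2(\Z)\cong\Z/2\Z$, so an extra relation such as $(e_{12}e_{21}^{-1}e_{12})^4=1$ must be added before any rewriting begins, and the cases $n=1,2$ (which the theorem covers) need separate treatment. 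As it stands the proposal is a plan whose essential step --- sufficiency of the listed relators --- is missing.
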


For $1\leq{i}\leq{g-1}$ and $1\leq{j}\leq{g}$ with $i\neq{j}$, let 
$Y_{ij}=f((Y_{i;j})_\ast)$, where $\varphi_\ast\in\AZ$ means the
automorphism induced by $\varphi\in\M(N_g)$.
Then we have that $Y_{ij}=E_{ij}F_i$ if $j<g$ and $Y_{ig}=F_i$.
We now prove the following.

\begin{prop}\label{kobayashi2}
For $g-1\geq1$, $\g(g-1)$ has a finite presentation with generators
 $Y_{ij}$ for $1\leq{i}\leq{g-1}$ and $1\leq{j}\leq{g}$ with $i\neq{j}$,
 and with relators
\begin{enumerate}
 \item $Y_{ij}^2$ for $1\leq{i}\leq{g-1}$ and $1\leq{j}\leq{g}$,
 \item $[Y_{ik},Y_{jk}]$ for $1\leq{i,j}\leq{g-1}$ and $1\leq{k}\leq{g}$,
 \item $[Y_{ij},Y_{ik}Y_{jk}]$ for $1\leq{i,j}\leq{g-1}$ and $1\leq{k}\leq{g}$,
 \item $[Y_{ij},Y_{kl}]$ for $1\leq{i,k}\leq{g-1}$ and $1\leq{j,l}\leq{g}$,
 \item $(Y_{ij}Y_{ik}Y_{il})^2$ for $1\leq{i}\leq{g-1}$ and $1\leq{j,k,l}\leq{g}$,
 \item $(Y_{ji}Y_{ij}Y_{kj}Y_{jk}Y_{ik}Y_{ki})^2$ for $1\leq{i,j,k}\leq{g-1}$,
\end{enumerate}
where $[X,Y]=X^{-1}Y^{-1}XY$ and $i,j,k,l$ are mutually different.
\end{prop}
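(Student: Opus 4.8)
The plan is to derive the presentation in Proposition~\ref{kobayashi2} from the known presentation of $\g(g-1)$ given in Theorem~\ref{kobayashi} by a Tietze transformation, using the change of generators $Y_{ij}=E_{ij}F_i$ (for $j<g$) and $Y_{ig}=F_i$. Writing $n=g-1$, the generators $E_{ij}$ and $F_i$ of Theorem~\ref{kobayashi} are indexed by $1\leq i,j\leq n$, while the $Y_{ij}$ are indexed by $1\leq i\leq n$ and $1\leq j\leq n+1=g$, so the counts match: the $n$ matrices $F_i$ correspond to $Y_{ig}$, and the $n(n-1)$ matrices $E_{ij}$ to the remaining $Y_{ij}$. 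First I would record the inverse substitution $F_i=Y_{ig}$ and $E_{ij}=Y_{ij}Y_{ig}$ (using $F_i^2=1$), which expresses the old generators as words in the new ones; this makes the change of generating set a genuine Tietze move rather than just a relabeling.

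The heart of the proof is then a bookkeeping verification that, under these substitutions, each defining relator of Theorem~\ref{kobayashi} is a consequence of the relators (1)--(6) of Proposition~\ref{kobayashi2}, and conversely that each relator (1)--(6) follows from the relators of Theorem~\ref{kobayashi}. The plan is to go through the relator families in order. Relator $F_i^2$ becomes $Y_{ig}^2$, a special case of (1); the relations $(E_{ij}F_i)^2$ and $(E_{ij}F_j)^2$ become $Y_{ij}^2$ for $j<g$ and a relation relating $Y_{ij}$, $Y_{jg}$-type words, so I expect (1) to absorb the squares and (2) to absorb the commuting relations $[E_{ij},E_{kj}]$ translated via $E_{kj}=Y_{kj}Y_{kg}$; similarly $(F_iF_j)^2$ maps to $[Y_{ig},Y_{jg}]$, an instance of (2) with $k=g$. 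The commutator families (3a) and the disjointness relation (4) should translate into the commutators (2),(3),(4), with the twisted relation $[E_{ij},E_{ki}]E_{kj}^2$ requiring the most care since it mixes three index pairs; I would handle it by expanding $E_{ij}=Y_{ij}Y_{ig}$, $E_{ki}=Y_{ki}Y_{kg}$, $E_{kj}=Y_{kj}Y_{kg}$ and reducing modulo (1)--(4). The order-two relations $(E_{ij}\cdots)^2$ of (3b) should match relator (6), and the relations I have grouped should reproduce the ternary relator $(Y_{ij}Y_{ik}Y_{il})^2$ in (5) from the interplay of $F_i^2$ with the $E$-relations.

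The main obstacle, and where I would spend the bulk of the argument, is the two longest relators: the braid-type relator (3b), $(E_{ji}E_{ij}^{-1}E_{kj}^{-1}E_{jk}E_{ik}E_{ki}^{-1})^2$, and its counterpart (6), $(Y_{ji}Y_{ij}Y_{kj}Y_{jk}Y_{ik}Y_{ki})^2$. Verifying that these are equivalent under the substitution is delicate because it involves six generators with signs, and because in the $Y$-presentation every generator is an involution (so inverses disappear) whereas in the $E$-presentation the $E_{ij}$ have infinite order; one must carefully track how $E_{ij}^{-1}=F_iY_{ij}$ versus $E_{ij}=Y_{ij}F_i$ interact, and use (1) and the commutators to move the $F_i=Y_{ig}$ factors to one side so that the six-term word collapses to the stated form. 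I would present this computation as the key lemma of the proof, doing the reduction for relator (6) explicitly and noting that the reverse direction follows by the same manipulations read backwards, after which invoking the standard Tietze-transformation principle completes the proof that the two presentations define isomorphic groups.
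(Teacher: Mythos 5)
Your proposal is correct and follows essentially the same route as the paper: a Tietze transformation via the substitutions $Y_{ij}=E_{ij}F_i$ (for $j<g$), $Y_{ig}=F_i$ and their inverses $E_{ij}=Y_{ij}Y_{ig}$, $F_i=Y_{ig}$, with a two-directional verification that each family of relators is a consequence of the other, the longest computations being those matching relator (3)(b) of Theorem~\ref{kobayashi} with relator (6). The only minor discrepancies are in your predicted bookkeeping (e.g.\ $(E_{ij}F_j)^2$ actually reduces to the relator $[Y_{ij},Y_{ig}Y_{jg}]$ of type (3) rather than being absorbed by (1) and (2), and (5) arises from $[E_{ij},E_{ik}]$ together with $F_i^2$), which does not affect the validity of the approach.
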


\begin{proof}
At first, we show that the relators of $\g(g-1)$ for
 Proposition~\ref{kobayashi2} are obtained from that for
 Theorem~\ref{kobayashi}.
By the relation $F_i^2=1$, we may identify $F_i^{-1}$ with $F_i$ in
 $\g(g-1)$.
\begin{enumerate}
 \item We have 
       $Y_{ij}^2=\left\{
        \begin{array}{ll}
	 (E_{ij}F_i)^2&(j<g),\\
	 F_i^2&(j=g).
	\end{array}
       \right.$
       Hence we obtain the relator $Y_{ij}^2$ in $\g(g-1)$.
 \item For $k<g$, we see
       \begin{eqnarray*}
	Y_{ik}Y_{jk}
	&=&\underset{(3)(a)}{\underline{E_{ik}F_iE_{jk}}}F_j\\
	&=&E_{jk}\underset{(2),~(3)(a)}{\underline{E_{ik}F_iF_j}}\\
	&=&E_{jk}F_jE_{ik}F_i\\
	&=&Y_{jk}Y_{ik}.
       \end{eqnarray*}
       In addition, we see
       \begin{eqnarray*}
	Y_{ig}Y_{jg}
	&=&F_iF_j\\
	&\underset{(2)}{=}&F_jF_i\\
	&=&Y_{jg}Y_{ig}.
       \end{eqnarray*}
       Hence we obtain the relator $[Y_{ik},Y_{jk}]$ in $\g(g-1)$.
 \item For $k<g$, we see
       \begin{eqnarray*}
	Y_{ij}Y_{ik}Y_{jk}
	&=&E_{ij}\underset{(2)}{\underline{F_iE_{ik}}}\underset{(3)(a),~(2)}{\underline{F_iE_{jk}F_j}}\\
	&=&\underset{(3)(a),~(2)}{\underline{E_{ij}E_{ik}^{-1}F_i}}E_{jk}F_jF_i\\
	&=&E_{ik}^{-1}F_i\underset{(3)(a)}{\underline{E_{ij}^{-1}E_{jk}}}F_jF_i\\
	&=&E_{ik}^{-1}\underset{(2)}{\underline{F_iE_{ik}^{-2}}}E_{jk}\underset{(2)}{\underline{E_{ij}^{-1}F_j}}F_i\\
	&=&E_{ik}F_iE_{jk}F_jE_{ij}F_i\\
	&=&Y_{ik}Y_{jk}Y_{ij}.
       \end{eqnarray*}
       In addition, we see
       \begin{eqnarray*}
	Y_{ij}Y_{ig}Y_{jg}
	&=&\underset{(2)}{\underline{E_{ij}F_i}}~\underset{(2)}{\underline{F_iF_j}}\\
	&=&F_i\underset{(2)}{\underline{E_{ij}^{-1}F_j}}F_i\\
	&=&F_iF_jE_{ij}F_i\\
	&=&Y_{ig}Y_{jg}Y_{ij}.
       \end{eqnarray*}
       Hence we obtain the relator $[Y_{ij},Y_{ik}Y_{jk}]$ in
       $\g(g-1)$.
 \item For $j,l<g$, we see
       \begin{eqnarray*}
	Y_{ij}Y_{kl}
	&=&E_{ij}\underset{(3)(a),~(2)}{\underline{F_iE_{kl}F_k}}\\
	&=&\underset{(4),~(3)(a)}{\underline{E_{ij}E_{kl}F_k}}F_i\\
	&=&E_{kl}F_kE_{ij}F_i\\
	&=&Y_{kl}Y_{ij}.
       \end{eqnarray*}
       In addition, we see
       \begin{eqnarray*}
	Y_{ij}Y_{kg}
	&=&E_{ij}F_iF_k\\
	&\underset{(2),~(3)(a)}{=}&F_kE_{ij}F_i\\
	&=&Y_{kg}Y_{ij}.
       \end{eqnarray*}
       Hence we obtain the relator $[Y_{ij},Y_{kl}]$ in $\g(g-1)$.
 \item For the relator $(Y_{ij}Y_{ik}Y_{il})^2$, by the fact
       $Y_{im}=Y_{im}^{-1}$, applying conjugations and taking their
       inverses, it suffices to consider the case $j<k<l$.
       For $l<g$, we see
       \begin{eqnarray*}
	Y_{ij}Y_{ik}Y_{il}
	&=&E_{ij}\underset{(2)}{\underline{F_iE_{ik}}}~\underset{(2)}{\underline{F_iE_{il}}}F_i\\
	&=&\underset{(3)(a),~(2)}{\underline{E_{ij}E_{ik}^{-1}F_i}}E_{il}^{-1}F_iF_i\\
	&=&E_{ik}^{-1}F_i\underset{(3)(a),~(2)}{\underline{E_{ij}^{-1}E_{il}^{-1}F_i}}F_i\\
	&=&E_{ik}^{-1}\underset{(2)}{\underline{F_iE_{il}^{-1}}}F_iE_{ij}F_i\\
	&=&\underset{(3)(a),~(2)}{\underline{E_{ik}^{-1}E_{il}F_i}}F_iE_{ij}F_i\\
	&=&E_{il}F_iE_{ik}F_iE_{ij}F_i\\
	&=&Y_{il}Y_{ik}Y_{ij}\\
	&=&Y_{il}^{-1}Y_{ik}^{-1}Y_{ij}^{-1}.
       \end{eqnarray*}
       In addition, we see
       \begin{eqnarray*}
	Y_{ij}Y_{ik}Y_{ig}
	&=&\underset{(2),~(3)(a)}{\underline{E_{ij}F_iE_{ik}F_i}}F_i\\
	&=&F_iE_{ik}F_iE_{ij}F_i\\
	&=&Y_{ig}Y_{ik}Y_{ij}\\
	&=&Y_{ig}^{-1}Y_{ik}^{-1}Y_{ij}^{-1}.
       \end{eqnarray*}
       Hence we obtain the relator  $(Y_{ij}Y_{ik}Y_{il})^2$ in
       $\g(g-1)$.
 \item We see 
       \begin{eqnarray*}
	Y_{ji}Y_{ij}\cdot Y_{kj}Y_{jk}\cdot Y_{ik}Y_{ki}
	&=&(E_{ji}F_jE_{ij}F_i)(E_{kj}F_kE_{jk}F_j)(E_{ik}\underline{F_i}E_{ki}F_k)\\
	&\underset{(2)}{=}&(E_{ji}F_jE_{ij}F_i)(E_{kj}F_kE_{jk}\underline{F_j})(E_{ik}E_{ki}^{-1})F_iF_k\\
	&\underset{(3)(a)}{=}&(E_{ji}F_jE_{ij}F_i)(E_{kj}\underline{F_k}E_{jk})(E_{ik}E_{ki}^{-1})F_jF_iF_k\\
	&\underset{(2)}{=}&(E_{ji}F_jE_{ij}\underline{F_i})(E_{kj}E_{jk}^{-1})(E_{ik}^{-1}E_{ki})F_kF_jF_iF_k\\
	&\underset{(3)(a),~(2)}{=}&(E_{ji}\underline{F_j}E_{ij})(E_{kj}E_{jk}^{-1})(E_{ik}E_{ki}^{-1})F_iF_kF_jF_iF_k\\
	&\underset{(2),~(3)(a)}{=}&(E_{ji}E_{ij}^{-1})(E_{kj}^{-1}E_{jk})(E_{ik}E_{ki}^{-1})F_jF_iF_kF_jF_iF_k\\
	&\underset{(1),~(2)}{=}&(E_{ji}E_{ij}^{-1})(E_{kj}^{-1}E_{jk})(E_{ik}E_{ki}^{-1}).
       \end{eqnarray*}
       By (3)~(b) of Theorem~\ref{kobayashi}, we obtain the relator  
       $(Y_{ji}Y_{ij}\cdot Y_{kj}Y_{jk}\cdot Y_{ik}Y_{ki})^2$ in
       $\g(g-1)$.
\end{enumerate}
Next, we show that the relators of $\g(g-1)$ for
 Theorem~\ref{kobayashi} are obtained from that for
 Proposition~\ref{kobayashi2}.
By the relation $Y_{ij}^2=1$, we may identify $Y_{ij}^{-1}$ with
 $Y_{ij}$ in $\g(g-1)$.
Note that $E_{ij}=Y_{ij}Y_{ig}$ and $F_i=Y_{ig}$.
\begin{enumerate}
 \item Since $F_i^2=Y_{ig}^2$,
       we have the relator $F_i^2$ in $\g(g-1)$.
 \item Since $E_{ij}F_i=Y_{ij}$, we have the relator $(E_{ij}E_i)^2$ in
       $\g(g-1)$.
       We see 
       \begin{eqnarray*}
	(E_{ij}F_j)^2
	&=&\underset{(1)}{\underline{Y_{ij}}}~\underset{(1),~(2)}{\underline{Y_{ig}Y_{jg}}}\cdot{}Y_{ij}Y_{ig}Y_{jg}\\
	&=&Y_{ij}^{-1}(Y_{ig}Y_{jg})^{-1}\cdot Y_{ij}Y_{ig}Y_{jg}\\
	&=&[Y_{ij},Y_{ig}Y_{jg}].
       \end{eqnarray*}
       Hence we obtain the relator $(E_{ij}F_j)^2$ in $\g(g-1)$.
       We see 
       \begin{eqnarray*}
	(F_iF_j)^2
	&=&Y_{ig}\underset{(2)}{\underline{Y_{jg}Y_{ig}}}Y_{jg}\\
	&=&Y_{ig}^2Y_{jg}^2.
       \end{eqnarray*}
       Hence we obtain the relator $(F_iF_j)^2$ in $\g(g-1)$.
 \item \begin{enumerate}
	\item We see 
	      \begin{eqnarray*}
	       E_{ij}E_{ik}
		&=&\underset{(5)}{\underline{Y_{ij}Y_{ig}Y_{ik}}}Y_{ig}\\
	       &=&Y_{ik}Y_{ig}Y_{ij}Y_{ig}\\
	       &=&E_{ik}E_{ij}.
	      \end{eqnarray*}
	      Hence we obtain the relator $[E_{ij},E_{ik}]$ in $\g(g-1)$.
	      We see 
	      \begin{eqnarray*}
	       E_{ij}E_{kj}
	       &=&Y_{ij}\underset{(4),~(2)}{\underline{Y_{ig}Y_{kj}Y_{kg}}}\\
	       &=&\underset{(2),~(4)}{\underline{Y_{ij}Y_{kj}Y_{kg}}}Y_{ig}\\
	       &=&Y_{kj}Y_{kg}Y_{ij}Y_{ig}\\
	       &=&E_{kj}E_{ij}.
	      \end{eqnarray*}
	      Hence we obtain the relator $[E_{ij},E_{kj}]$ in $\g(g-1)$.
	      We see 
	      \begin{eqnarray*}
	       E_{ij}F_k
	       &=&Y_{ij}Y_{ig}Y_{kg}\\
	       &\underset{(2),~(4)}{=}&Y_{kg}Y_{ij}Y_{ig}\\
	       &=&F_kE_{ij}.
	      \end{eqnarray*}
	      Hence we obtain the relator $[E_{ij},F_k]$ in $\g(g-1)$.
	      We see 
	      \begin{eqnarray*}
	       E_{ij}E_{ki}E_{kj}^2
		&=&Y_{ij}Y_{ig}\underset{(5)}{\underline{Y_{ki}Y_{kg}Y_{kj}}}Y_{kg}Y_{kj}Y_{kg}\\
	       &=&Y_{ij}\underset{(4),~(2)}{\underline{Y_{ig}Y_{kj}Y_{kg}}}Y_{ki}Y_{kg}Y_{kj}Y_{kg}\\
	       &=&\underset{(2)}{\underline{Y_{ij}Y_{kj}}}~\underset{(3)}{\underline{Y_{kg}Y_{ig}Y_{ki}}}Y_{kg}Y_{kj}Y_{kg}\\
	       &=&\underset{(3)}{\underline{Y_{kj}Y_{ij}Y_{ki}}}Y_{kg}\underset{(2),~(4)}{\underline{Y_{ig}Y_{kg}Y_{kj}Y_{kg}}}\\
	       &=&Y_{ki}Y_{kj}Y_{ij}\underset{(1)}{\underline{Y_{kg}Y_{kg}}}Y_{kj}Y_{kg}Y_{ig}\\
	       &=&Y_{ki}Y_{kj}\underset{(2),~(4)}{\underline{Y_{ij}Y_{kj}Y_{kg}}}Y_{ig}\\
	       &=&Y_{ki}\underset{(1)}{\underline{Y_{kj}Y_{kj}}}Y_{kg}Y_{ij}Y_{ig}\\
	       &=&Y_{ki}Y_{kg}Y_{ij}Y_{ig}\\
	       &=&E_{ki}E_{ij}.
	      \end{eqnarray*}
	      Hence we obtain the relator $[E_{ij},E_{ki}]E_{kj}^2$ in
	      $\g(g-1)$.
	\item Since we already obtained relators (1), (2) and (a) of (3)
	      for Theorem~\ref{kobayashi}, using these relators, we see
	      \begin{eqnarray*}
	       (E_{ji}E_{ij}^{-1})(E_{kj}^{-1}E_{jk})(E_{ik}E_{ki}^{-1})
	       &=&(E_{ji}E_{ij}^{-1})(E_{kj}^{-1}E_{jk})(E_{ik}E_{ki}^{-1})F_jF_iF_kF_jF_iF_k\\
	       &=&(E_{ji}F_jE_{ij})(E_{kj}E_{jk}^{-1})(E_{ik}E_{ki}^{-1})F_iF_kF_jF_iF_k\\
	       &=&(E_{ji}F_jE_{ij}F_i)(E_{kj}E_{jk}^{-1})(E_{ik}^{-1}E_{ki})F_kF_jF_iF_k\\
	       &=&(E_{ji}F_jE_{ij}F_i)(E_{kj}F_kE_{jk})(E_{ik}E_{ki}^{-1})F_jF_iF_k\\
	       &=&(E_{ji}F_jE_{ij}F_i)(E_{kj}F_kE_{jk}F_j)(E_{ik}E_{ki}^{-1})F_iF_k\\
	       &=&(E_{ji}F_jE_{ij}F_i)(E_{kj}F_kE_{jk}F_j)(E_{ik}F_iE_{ki}F_k)\\
	       &=&Y_{ji}Y_{ij}\cdot Y_{kj}Y_{jk}\cdot Y_{ik}Y_{ki}.
	      \end{eqnarray*}
	      By Proposition~\ref{kobayashi2}~(6), we obtain the relator
	      $(E_{ji}E_{ij}^{-1}E_{kj}^{-1}E_{jk}E_{ik}E_{ki}^{-1})^2$
	      in $\g(g-1)$.
      \end{enumerate}
 \item We see 
       \begin{eqnarray*}
	E_{ij}E_{kl}
	&=&Y_{ij}\underset{(2),~(4)}{\underline{Y_{ig}Y_{kl}Y_{kg}}}\\
	&=&\underset{(4)}{\underline{Y_{ij}Y_{kl}Y_{kg}}}Y_{ig}\\
	&=&Y_{kl}Y_{kg}Y_{ij}Y_{ig}\\
	&=&E_{kl}E_{ij}.
       \end{eqnarray*}
       Hence we obtain the relator $[E_{ij},E_{kl}]$ in $\g(g-1)$.
\end{enumerate}
Thus, we complete the proof.
\end{proof}

\section{A normal generating set for $\I(N_g)$}\label{norgen}

Let $f:\ker\Phi_g\to\g(g-1)$ be the isomorphism introduced in
Subsection~\ref{3.2}.
In order to obtain a presentation for $\g(g-1)$ whose generators are
$Y_{ij}:=f((Y_{i;j})_\ast)$, $T_{1jkl}^2:=f((T_{1,j,k,l}^2)_\ast)$, we
need to express $T_{1jkl}^2$ as a product of $Y_{ij}$'s.

For $I=\{i_1,i_2,\dots,i_k\}\subset\{1,2,\dots,g\}$, we define a simple
closed curve $\alpha_I'$ as shown in Figure~\ref{loopI2} and
$T_{i,j,k,l}'=t_{\alpha_{\{i,j,k,l\}}'}$.
Note that $T_{i,j,k,l}T_{i,j,k,l}'^{-1}$ is a BP map.
In addition, for $1\leq{m}\leq{g}$ with $m\neq{i,j,k,l}$, there exist
$A$-circles $\beta_{m,i}$, $\beta_{m,j}$, $\beta_{m,k}$ and
$\beta_{m,l}$ intersecting $\alpha_m$ at only one point such that
$$T_{i,j,k,l}^{-1}T_{i,j,k,l}'^{-1}=\prod_{m\neq{i,j,k,l}}Y_{\alpha_m,\beta_{m,l}}Y_{\alpha_m,\beta_{m,k}}Y_{\alpha_m,\beta_{m,j}}Y_{\alpha_m,\beta_{m,i}}.$$
For example, when $(i,j,k,l)$ is $(1,2,3,4)$, for $m\geq5$ and
$t=1,2,3,4$ we have
\begin{eqnarray*}
Y_{\alpha_m,\beta_{m,t}}&=&
\left\{
\begin{array}{ll}
Y_{m;t}^{-1}&(t=1,2,3),\\
Y_{m;m-1}^{-2}\cdots{}Y_{m;6}^{-2}Y_{m;5}^{-2}Y_{m;4}^{-1}&(t=4).
\end{array}
\right.
\end{eqnarray*}
Therefore, we have
\begin{eqnarray*}
T_{1jkl}^{-2}
&=&f((T_{1,j,k,l}^{-1}T_{1,j,k,l}'^{-1})_\ast)\\
&=&f((\prod_{m\neq{1,j,k,l}}Y_{\alpha_m,\beta_{m,l}}Y_{\alpha_m,\beta_{m,k}}Y_{\alpha_m,\beta_{m,j}}Y_{\alpha_m,\beta_{m,1}})_\ast).
\end{eqnarray*}
Note that any $Y$-homeomorphism $Y_{\alpha_m,\beta}$ is a product of
some $Y_{i;j}$ for $1\leq{i}\leq{g-1}$ and $1\leq{j}\leq{g}$ with
$i\neq{j}$.
For example, we have
\begin{align}\label{append(II)}
Y_{g;i}=&
(Y_{1;2}^2\cdots{}Y_{1;g-1}^2Y_{1;i}^{-1}Y_{1;g})\cdots(Y_{i-1;i}^2\cdots{}Y_{i-1;g-1}^2Y_{i-1;i}^{-1}Y_{i-1;g})\tag{I\!I}\\
&\cdot(Y_{i+1;i+2}^2\cdots{}Y_{i+1;g-1}^2Y_{i+1;i}^{-1}Y_{i+1;g}Y_{i+1;i}^2)\cdots(Y_{g-2;g-1}^2Y_{g-2;i}^{-1}Y_{g-2;g}Y_{g-2;i}^2)\notag\\
&\cdot(Y_{g-1;i}^{-1}Y_{g-1;g}Y_{g-1;i}^2)Y_{i;g}.\notag
\end{align}
The equation (\ref{append(II)}) will be shown in
Appendix~\ref{appendix}.
Thus $T_{1jkl}^2$ can be expressed as a product of $Y_{ij}$'s.

\begin{figure}[htbp]
\includegraphics[scale=0.5]{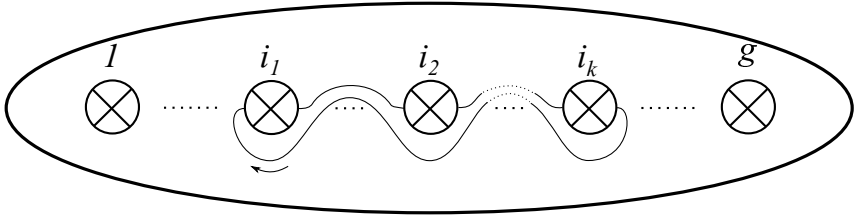}
\caption{The curve $\alpha_I'$ for $I=\{i_1,i_2,\dots,i_k\}$.}\label{}\label{loopI2}
\end{figure}

\subsection{A normal generating set for $\I(N_g)$ in $\g(N_g)$}\

Let $G$ be a group.
For $x_1,x_2,\dots,x_n\in{G}$, we say that $N$ is {\it normally
generated} by $x_1,x_2,\dots,x_n$ in $G$ if $N$ is a minimal normal
subgroup of $G$ which contains $x_1,x_2,\dots,x_n$.

In this subsection, we prove the following.

\begin{prop}\label{normalgen}
For $g\geq 4$, $\I(N_g)$ is normally generated by the following elements
 in $\g(N_g)$.
\begin{enumerate}
 \item $Y_{i;j}^2$ for $1\leq{i}\leq{g-1}$ and $1\leq{j}\leq{g}$,
 \item $[Y_{i;k},Y_{j;k}]$ for $1\leq{i,j}\leq{g-1}$ and $1\leq{k}\leq{g}$,
 \item $[Y_{i;j},Y_{i;k}Y_{j;k}]$ for $1\leq{i}\leq{g-1}$ and $1\leq{j,k}\leq{g}$,
 \item $[Y_{i;j},Y_{k;l}]$ for $1\leq{i,k}\leq{g-1}$ and $1\leq{j,l}\leq{g}$,
 \item $(Y_{i;j}Y_{i;k}Y_{i;l})^2$ for $1\leq{i}\leq{g-1}$ and $1\leq{j,k,l}\leq{g}$,
 \item $(Y_{j;i}Y_{i;j}Y_{k;j}Y_{j;k}Y_{i;k}Y_{k;i})^2$ for $1\leq{i,j,k}\leq{g-1}$,
 \item $T_{1,j,k,l}^2(\prod_{m\neq{1,j,k,l}}Y_{\alpha_m,\beta_{m,l}}Y_{\alpha_m,\beta_{m,k}}Y_{\alpha_m,\beta_{m,j}}Y_{\alpha_m,\beta_{m,1}})$
       for $1<j<k<l\leq{g}$,
\end{enumerate}
where $i,j,k,l$ are mutually different.
\end{prop}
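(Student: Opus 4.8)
The plan is to deduce the statement from the presentation of $\g(g-1)$ in Proposition~\ref{kobayashi2} together with the generating set of $\g(N_g)$ in Theorem~\ref{hs}, via the standard principle that the kernel of a surjection onto a finitely presented group is normally generated by the lifts of the defining relators. Write $\pi=f\circ\rho'\colon\g(N_g)\to\g(g-1)$; by Subsection~\ref{3.2} this map is surjective and $\ker\pi=\ker\rho'=\I(N_g)$. By Theorem~\ref{hs} the group $\g(N_g)$ is generated by the $Y_{i;j}$ together with the $T_{1,j,k,l}^2$, and by construction $\pi(Y_{i;j})=Y_{ij}$ and $\pi(T_{1,j,k,l}^2)=T_{1jkl}^2$.

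First I would check that each of the elements (1)--(7) lies in $\I(N_g)$, so that the normal closure $K$ of (1)--(7) in $\g(N_g)$ is contained in $\I(N_g)$. For (1)--(6) this is immediate: applying $\pi$ and using $\pi(Y_{i;j})=Y_{ij}$ turns each of these words into the corresponding relator (1)--(6) of Proposition~\ref{kobayashi2}, hence into the identity of $\g(g-1)$. For (7) I would invoke the computation preceding this proposition, namely the identity $T_{1jkl}^{-2}=f((\prod_{m\neq1,j,k,l}Y_{\alpha_m,\beta_{m,l}}Y_{\alpha_m,\beta_{m,k}}Y_{\alpha_m,\beta_{m,j}}Y_{\alpha_m,\beta_{m,1}})_\ast)$ in $\g(g-1)$; applying $\pi$ to (7) then gives $T_{1jkl}^2\cdot T_{1jkl}^{-2}=1$, so (7) also lies in $\ker\pi=\I(N_g)$. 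Hence $K\subseteq\I(N_g)$.

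For the reverse inclusion I would pass to the quotient $\g(N_g)/K$, which is generated by the images of the $Y_{i;j}$ and the $T_{1,j,k,l}^2$. Relation (7) shows that modulo $K$ each $T_{1,j,k,l}^2$ equals a product of the $Y_{i;j}$ (each $Y_{\alpha_m,\beta}$ being a product of $Y_{i;j}$'s by equation (\ref{append(II)})), so $\g(N_g)/K$ is already generated by the images of the $Y_{i;j}$ alone. Relations (1)--(6) hold in $\g(N_g)/K$ and are precisely the defining relators of $\g(g-1)$ under the correspondence $Y_{i;j}\leftrightarrow Y_{ij}$; hence by von Dyck's theorem there is a homomorphism $\g(g-1)\to\g(N_g)/K$ sending $Y_{ij}$ to the class of $Y_{i;j}$. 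This homomorphism is a two-sided inverse of the map $\g(N_g)/K\to\g(g-1)$ induced by $\pi$, as one checks on generators (the only nontrivial case, $T_{1,j,k,l}^2$, being handled by (7)). Therefore $\pi$ induces an isomorphism $\g(N_g)/K\cong\g(g-1)$, which forces $K=\ker\pi=\I(N_g)$, as required.

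The bulk of the argument is routine bookkeeping, matching relators (1)--(6) with those of Proposition~\ref{kobayashi2}; the one point requiring care is item (7), both in verifying that its image under $\pi$ is trivial and in verifying that it genuinely expresses $T_{1,j,k,l}^2$ as a product of the $Y_{i;j}$ modulo $K$. Both of these rely on the explicit identity $T_{1jkl}^{-2}=f((\prod\cdots)_\ast)$ recorded before the proposition and on equation (\ref{append(II)}), which rewrites each relevant $Y$-homeomorphism in terms of the generators $Y_{i;j}$.
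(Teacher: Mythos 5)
Your argument is correct and takes essentially the same route as the paper: the paper packages your von~Dyck/two-sided-inverse step as its Lemma~\ref{lem1} (the kernel of a surjection onto a finitely presented group is normally generated by lifts of the defining relators), and otherwise uses exactly the same ingredients, namely Proposition~\ref{kobayashi2}, Theorem~\ref{hs}, and the identity $f\bigl((\prod_{m\neq 1,j,k,l}Y_{\alpha_m,\beta_{m,l}}Y_{\alpha_m,\beta_{m,k}}Y_{\alpha_m,\beta_{m,j}}Y_{\alpha_m,\beta_{m,1}})_\ast\bigr)=T_{1jkl}^{-2}$ to handle generator (7).
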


Let $\Gamma=\la{g_1,g_2,\dots,g_n\mid{r_1,r_2,\dots,r_k}}\ra$ be a
finitely presented group, and let $G$ be a group generated by 
$\tilde{g}_1,\tilde{g}_2,\dots,\tilde{g}_n$.
For
$r_i=g_{i(1)}^{\varepsilon_1}g_{i(2)}^{\varepsilon_2}\cdots{g_{i(m)}^{\varepsilon_m}}$,
define
$\tilde{r}_i=\tilde{g}_{i(1)}^{\varepsilon_1}\tilde{g}_{i(2)}^{\varepsilon_2}\cdots\tilde{g}_{i(m)}^{\varepsilon_m}$,
where $\varepsilon_j=\pm1$.
Let $\widetilde{N}$ be a normal subgroup of $G$ normally generated by
$\tilde{r}_1,\tilde{r}_2,\dots,\tilde{r}_k$. 
We first prove the following.

\begin{lem}\label{lem1}
Let $\mu:G\to\Gamma$ be a homomorphism sending $\tilde{g}_i$ to $g_i$.
Then we have $\widetilde{N}=\ker\mu$.
\end{lem}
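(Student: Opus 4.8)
The plan is to prove the two inclusions $\widetilde{N} \subseteq \ker\mu$ and $\ker\mu \subseteq \widetilde{N}$ separately, where the second is the substantive one. The setup is the standard presentation-theoretic fact that if $\Gamma$ has presentation $\la g_1,\dots,g_n \mid r_1,\dots,r_k\ra$ and $G$ is generated by lifts $\tilde g_1,\dots,\tilde g_n$, then the kernel of the lift-induced surjection $\mu$ is exactly the normal closure of the lifted relators. First I would observe that $\mu$ is surjective, since its image contains all the generators $g_i$ of $\Gamma$.

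For the inclusion $\widetilde{N} \subseteq \ker\mu$, I would note that $\mu(\tilde r_i) = r_i = 1$ in $\Gamma$ for each $i$, because $\mu$ is a homomorphism sending each $\tilde g_{i(j)}$ to $g_{i(j)}$ and each $r_i$ is a relator of $\Gamma$. Hence every normal generator $\tilde r_i$ of $\widetilde{N}$ lies in $\ker\mu$, and since $\ker\mu$ is normal in $G$, the whole normal closure $\widetilde{N}$ is contained in $\ker\mu$.

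For the reverse inclusion $\ker\mu \subseteq \widetilde{N}$, the plan is to factor $\mu$ through the quotient $G/\widetilde{N}$. Since $\widetilde{N} \subseteq \ker\mu$, there is an induced homomorphism $\bar\mu : G/\widetilde{N} \to \Gamma$. I would then construct an inverse homomorphism $\nu : \Gamma \to G/\widetilde{N}$ by sending $g_i \mapsto \tilde g_i \widetilde{N}$. The key point making $\nu$ well-defined is precisely that each relator $r_i$ of $\Gamma$ maps to $\tilde r_i \widetilde{N} = \widetilde{N}$, the identity of $G/\widetilde{N}$, because $\tilde r_i \in \widetilde{N}$ by definition; so the defining relations of $\Gamma$ are respected and $\nu$ descends from the free group to $\Gamma$ by von Dyck's theorem. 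One then checks that $\bar\mu$ and $\nu$ are mutually inverse on generators, hence $\bar\mu$ is an isomorphism, which forces $\ker\mu = \widetilde{N}$.

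The main obstacle, and the only genuinely delicate point, is the well-definedness of $\nu$: one must be careful that an arbitrary element of $\ker\mu$ is a consequence, in the free group on the $\tilde g_i$, of the lifted relators together with conjugation and inversion, i.e. lies in the normal closure $\widetilde{N}$. This is exactly what von Dyck's theorem guarantees once the relators are shown to die in $G/\widetilde{N}$, so the whole argument reduces to invoking the universal property of group presentations; no computation with the specific relators $r_1,\dots,r_k$ is needed, and the lemma holds in full generality for any finitely presented $\Gamma$.
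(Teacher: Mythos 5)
Your proof is correct, and it reaches the conclusion by a slightly different mechanism than the paper. The paper works upstairs in the free group $F$ on $g_1,\dots,g_n$: it introduces $\pi:F\to\Gamma$ and $\nu:F\to G$ with $\pi=\mu\nu$, lifts an arbitrary element of $\ker\mu$ to $F$, observes that the lift lies in $N=\ker\pi$ (the normal closure of the $r_i$ in $F$), and concludes that $\nu|_N:N\to\ker\mu$ is surjective, so $\ker\mu=\nu(N)=\widetilde{N}$ (the last equality being the fact that a surjection carries a normal closure to the normal closure of the images). You instead split the statement into two inclusions and, for the substantive one, pass to the quotient $G/\widetilde{N}$ and build an explicit inverse $\Gamma\to G/\widetilde{N}$ via von Dyck's theorem, deducing that the induced map $\bar\mu$ is an isomorphism and hence that $\ker\mu/\widetilde{N}$ is trivial. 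The two arguments invoke the same essential input --- the universal property of the presentation of $\Gamma$ --- but your version avoids element-wise lifting and makes the easy inclusion $\widetilde{N}\subseteq\ker\mu$ explicit (the paper leaves it implicit in the commutativity of its diagram), while the paper's version makes visible the identity $\nu(N)=\widetilde{N}$ that you absorb into the well-definedness of your inverse map. Both are complete; your mutual-inverse check does require noting that the cosets $\tilde g_i\widetilde{N}$ generate $G/\widetilde{N}$, which holds because the $\tilde g_i$ generate $G$, as you implicitly use.
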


\begin{proof}
Let $F=\la{g_1,g_2,\dots,g_n}\ra$, and let $N$ be a normal subgroup of
 $F$ normally generated by $r_1,r_2,\dots,r_k$.
Let $\pi:F\to\Gamma$ be a natural epimorphism, and let $\nu:F\to{G}$ be
 a homomorphism sending $g_i$ to $\tilde{g}_i$.
Since $\mu$ is surjective, we have $\pi=\mu\nu$.
Then we have the following short exact sequences and commutative diagram.

$$\xymatrix{
1\ar[r]&N\ar[r]\ar[d]^{\nu|_N}&F\ar[r]^{\pi}\ar[d]^{\nu}&\Gamma\ar[r]\ar@{=}[d]&1\\
1\ar[r]&\ker\mu\ar[r]&G\ar[r]^{\mu}&\Gamma\ar[r]&1
}$$

For any $\widetilde{R}\in\ker\mu$, there exists $R\in{F}$ such that
 $\nu(R)=\widetilde{R}$.
Then we have that $\pi(R)=\mu\nu(R)=\mu(\widetilde{R})=1$.
Hence we have $R\in\ker\pi=N$.
Therefore $\nu|_N:N\to\ker\mu$ is surjective.
Since $\nu(N)=\widetilde{N}$, we conclude that $\widetilde{N}=\ker\mu$. 
\end{proof}

\begin{proof}[Proof of Proposition~\ref{normalgen}]
Let $F$ be a free group of the rank $\binom{g}{3}+\binom{g}{2}$
generated by $Y_{ij}$ for $1\leq i\leq g-1$ and $1\leq j\leq g$ with
 $i\neq j$, and $T_{1jkl}^2$ for $1<j<k<l\leq g$, where
 $T_{1jkl}=f((T_{1,j,k,l})_*)$, and let $N$ be a normal subgroup of $F$
 normally generated by followings.
\begin{enumerate}
 \item $Y_{ij}^2$ for $1\leq i\leq g-1$ and $1\leq j\leq g$,
 \item $[Y_{ik},Y_{jk}]$ for $1\leq i,j\leq g-1$ and $1\leq k\leq g$,
 \item $[Y_{ij},Y_{ik}Y_{jk}]$ for $1\leq i\leq g-1$ and $1\leq j,k\leq g$,
 \item $[Y_{ij},Y_{kl}]$ for $1\leq i,k\leq g-1$ and $1\leq j,l\leq g$,
 \item $(Y_{ij}Y_{ik}Y_{il})^2$ for $1\leq i\leq g-1$ and $1\leq j,k,l\leq g$,
 \item $(Y_{ji}Y_{ij}Y_{kj}Y_{jk}Y_{ik}Y_{ki})^2$ for $1\leq i,j,k\leq g-1$,
 \item $T_{1jkl}^2f((\prod_{m\neq{1,j,k,l}}Y_{\alpha_m,\beta_{m,l}}Y_{\alpha_m,\beta_{m,k}}Y_{\alpha_m,\beta_{m,j}}Y_{\alpha_m,\beta_{m,1}})_*)$
       for $1<j<k<l\leq g$,
\end{enumerate}
where $i,j,k,l$ are mutually different.
By Proposition~\ref{kobayashi2} and the fact that
 $$f((\prod_{m\neq{1,j,k,l}}Y_{\alpha_m,\beta_{m,l}}Y_{\alpha_m,\beta_{m,k}}Y_{\alpha_m,\beta_{m,j}}Y_{\alpha_m,\beta_{m,1}})_*)=T_{1jkl}^{-2},$$
 $\g(g-1)$ is the quotient of $F$ by $N$.
Let $\widetilde{N}$ be the normal subgroup of $\g(N_g)$ normally
 generated by followings.
\begin{enumerate}
 \item $Y_{i;j}^2$ for $1\leq i\leq g-1$ and $1\leq j\leq g$,
 \item $[Y_{i;k},Y_{j;k}]$ for $1\leq i,j\leq g-1$ and $1\leq k\leq g$,
 \item $[Y_{i;j},Y_{i;k}Y_{j;k}]$ for $1\leq i\leq g-1$ and $1\leq j,k\leq g$,
 \item $[Y_{i;j},Y_{k;l}]$ for $1\leq i,k\leq g-1$ and $1\leq j,l\leq g$,
 \item $(Y_{i;j}Y_{i;k}Y_{i;l})^2$ for $1\leq i\leq g-1$ and $1\leq j,k,l\leq g$,
 \item $(Y_{j;i}Y_{i;j}Y_{k;j}Y_{j;k}Y_{i;k}Y_{k;i})^2$ for $1\leq i,j,k\leq g-1$,
 \item $T_{1,j,k,l}^2(\prod_{m\neq{1,j,k,l}}Y_{\alpha_m,\beta_{m,l}}Y_{\alpha_m,\beta_{m,k}}Y_{\alpha_m,\beta_{m,j}}Y_{\alpha_m,\beta_{m,1}})$
       for $1<j<k<l\leq g$,
\end{enumerate}
where $i,j,k,l$ are mutually different.
Let $\pi:F\to\g(g-1)$ be a natural epimorphism, and let
 $\nu:F\to\g(N_g)$ be a homomorphism sending $Y_{ij}$ to $Y_{i;j}$,
 $T_{1jkl}^2$ to $T_{1,j,k,l}^2$.
Then we have the following short exact sequences and commutative
 diagram.
$$\xymatrix{
1\ar[r]&N\ar[r]\ar[d]^{\nu|_{N}}&F\ar[r]^{\pi}\ar[d]^{\nu}&\g(g-1)\ar[r]\ar@{->}[d]^{f^{-1}}&1\\
1\ar[r]&\ker\rho'\ar[r]&\g(N_g)\ar[r]^{\rho'}&\ker\Phi_g\ar[r]&1
}$$
By Lemma~\ref{lem1}, we have $\widetilde{N}=\ker\rho'$.
On the other hand, since $\I(N_g)=\ker\rho'$, we obtain the claim.
\end{proof}

\subsection{A normal generating set for $\I(N_g)$ in $\M(N_g)$}\

In this subsection, we prove the following.

\begin{prop}\label{normalgen2}
For $g\geq 4$, $\I(N_g)$ is normally generated by the following elements
 in $\M(N_g)$.
\begin{enumerate}
 \item $Y_{1;2}^2$,
 \item $[Y_{1;3},Y_{2;3}]$,
 \item $[Y_{1;2},Y_{1;3}Y_{2;3}]$, $[Y_{1;3},Y_{1;2}Y_{3;2}]$,
 \item $[Y_{1;2},Y_{3;4}]$, $[Y_{1;3},Y_{2;4}]$,
 \item $(Y_{1;2}Y_{1;3}Y_{1;4})^2$,
 \item $(Y_{2;1}Y_{1;2}Y_{3;2}Y_{2;3}Y_{1;3}Y_{3;1})^2$,
 \item $T_{1,2,3,4}^2(\prod_{5\leq{m}\leq{g}}Y_{m;m-1}^{-2}\cdots{Y_{m;6}^{-2}}Y_{m;5}^{-2}Y_{m;4}^{-1}Y_{m;3}^{-1}Y_{m;2}^{-1}Y_{m;1}^{-1})$.
\end{enumerate}
\end{prop}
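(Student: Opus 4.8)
The plan is to derive Proposition~\ref{normalgen2} from Proposition~\ref{normalgen} by exploiting the conjugators that are available in $\M(N_g)$ but not inside $\g(N_g)$. First I record the two inclusions that together give the statement. Since $\I(N_g)=\ker\rho$ is normal in $\M(N_g)$, the normal closure in $\M(N_g)$ of any subset of $\I(N_g)$ is again contained in $\I(N_g)$; as each of the seven elements listed in Proposition~\ref{normalgen2} is a special instance of a generator from Proposition~\ref{normalgen} and hence lies in $\I(N_g)$, this gives one inclusion. For the reverse inclusion, Proposition~\ref{normalgen} asserts that $\I(N_g)$ is the normal closure \emph{in} $\g(N_g)\subseteq\M(N_g)$ of the seven \emph{families} (1)--(7), so it is contained in the normal closure of those families in the larger group $\M(N_g)$. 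Thus it suffices to prove that every generator appearing in Proposition~\ref{normalgen} lies in the normal closure in $\M(N_g)$ of the seven distinguished elements of Proposition~\ref{normalgen2}.

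The mechanism is conjugation by diffeomorphisms permuting the crosscaps. For each permutation $\sigma$ of $\{1,\dots,g\}$ there is $\phi_\sigma\in\M(N_g)$ inducing $\sigma$ on the $g$ crosscaps, and by the change-of-coordinates (homogeneity) principle $\phi_\sigma$ carries the standard configuration defining $Y_{i;j}$ to the one defining $Y_{\sigma(i);\sigma(j)}$, and likewise the configuration of $T_{i,j,k,l}$ to that of $T_{\sigma(i),\sigma(j),\sigma(k),\sigma(l)}$. Hence conjugation by $\phi_\sigma$ realizes $Y_{i;j}\mapsto Y_{\sigma(i);\sigma(j)}^{\pm1}$ and $T_{i,j,k,l}\mapsto T_{\sigma(i),\sigma(j),\sigma(k),\sigma(l)}^{\pm1}$, the sign recording the orientation of the pushing $A$-circle. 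For the single-factor square generator (1) the sign is immaterial, since a normal closure is inverse-closed. For the commutator families (2) and (4) the sign is also harmless: a sign flip on one factor turns a commutator into a conjugate of its inverse (for instance $[a^{-1},b]=a[a,b]^{-1}a^{-1}$), which again lies in the normal closure. For the families (3), (5) and (6), which involve a power of a \emph{product} of several $Y$-factors, I must instead choose $\phi_\sigma$ within its induced permutation so that the orientations of all the relevant $A$-circles are preserved, making every exponent $+1$, so that the conjugate of a representative is literally the desired generator.

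I would then treat the families one at a time. For (1), (5) and (6) a single crosscap permutation sending the relevant indices to $1,2,3,4$ in order conjugates an arbitrary generator to the listed representative. For (7) I choose $\sigma$ fixing crosscap $1$ and sending $\{j,k,l\}$ to $\{2,3,4\}$ while permuting the remaining indices $m$ among themselves; conjugating $T_{1,j,k,l}^2$ by $\phi_\sigma$ gives $T_{1,2,3,4}^2$, and the correction product $\prod_{m}Y_{\alpha_m,\beta_{m,\cdot}}$ is carried to the standard product by equivariance of the curves $\beta_{m,\cdot}$ under $\phi_\sigma$, yielding exactly representative (7). The genuinely distinct cases are (3) and (4), where two representatives appear precisely because the homeomorphism type of the underlying curve configuration carries a discrete invariant that no crosscap permutation can remove: in (4) it is whether the index pairs $\{i,j\}$ and $\{k,l\}$ are nested or interleaved, which governs whether $\alpha_{\{i,j\}}$ and $\alpha_{\{k,l\}}$ can be made disjoint or must meet; in (3) it is the analogous arrangement of the three indices shared among $\alpha_{\{i,j\}}$, $\alpha_{\{i,k\}}$ and $\alpha_{\{j,k\}}$. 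Each generator of (3) (resp. (4)) falls into exactly one of the two classes and is conjugated to the matching representative.

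The hard part will be the configuration analysis together with the orientation bookkeeping. Concretely, I must make the change-of-coordinates arguments precise enough to read off the exponent of each conjugated $Y$-homeomorphism, and, for families (3), (5) and (6), exhibit conjugating diffeomorphisms that preserve every relevant $A$-circle orientation so that no factor is inverted. I also expect the equivariance check in (7)---that conjugation intertwines $T_{1,j,k,l}^2$ with its correction product, the latter ranging over all remaining crosscaps---to require care. Finally, I would need to confirm that the two configuration types in (3) and (4) are exactly the correct count, neither collapsible to one nor demanding a third; the remaining families reduce to routine index relabelings once this conjugation dictionary for $\phi_\sigma$ is in hand.
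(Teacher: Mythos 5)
Your proposal is correct and follows essentially the same route as the paper: one inclusion from the normality of $\I(N_g)$ in $\M(N_g)$, and the other by conjugating each generator of Proposition~\ref{normalgen} to one of the seven listed representatives via crosscap-permuting mapping classes (the paper uses products of the specific crosscap transpositions $U_{i,j}$ for this, and is just as terse as you are about the orientation and configuration bookkeeping). Your observation that items (3) and (4) require two representatives because of the two topological types of curve configurations is exactly the point implicit in the paper's list.
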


\if0
We first show the following.

\begin{lem}\label{ghk}
Let $G$ be a group, $H$ a subgroup of $G$, and $K$ a normal subgroup of
 $G$.
Suppose that $K$ is a normal subgroup of $H$.
If $X\subset{K}$ normally generates $K$ in $H$, then $X$ normally
 generates $K$ also in $G$.
\end{lem}

\begin{proof}
Let $\la{X}\ra_G$ and $\la{X}\ra_H$ denote normal subgroups of $G$ and
 $H$ normally generated by $X$, respectively.
We assume that $K=\la{X}\ra_H$.
It is clear that $K\subset\la{X}\ra_G$.
For $x\in\la{X}\ra_G$, we denote
$x=(g_1x_{i(1)}^{\varepsilon_1}g_1^{-1})(g_2x_{i(2)}^{\varepsilon_2}g_2^{-1})\cdots(g_mx_{i(m)}^{\varepsilon_m}g_m^{-1})$
for some $x_{i(j)}\in{X}$, $g_j\in{G}$ and $\varepsilon_j=\pm1$.
Since $x_{i(j)}$ is in $K$ and $K$ is a normal subgroup of $G$, we have
 $g_jx_{i(j)}^{\varepsilon_j}g_j^{-1}\in{K}$.
Therefore $x\in{K}$.
Thus we have $K=\la{X}\ra_G$.
\end{proof}
\fi

For $1\leq{i<j}\leq{g}$, let $c_{ij}$ be simple closed curve on $N_g$ as
shown in Figure~\ref{U}, and let $U_{i,j}$ be a diffeomorphism over
$N_g$ as shown in Figure~\ref{U}.
Note that $Y_{i;j}=U_{i,j}T_{i,j}$ and $U_{i,j}^2=t_{c_{ij}}=Y_{i;j}^2$.

\begin{figure}[htbp]
\includegraphics[scale=0.5]{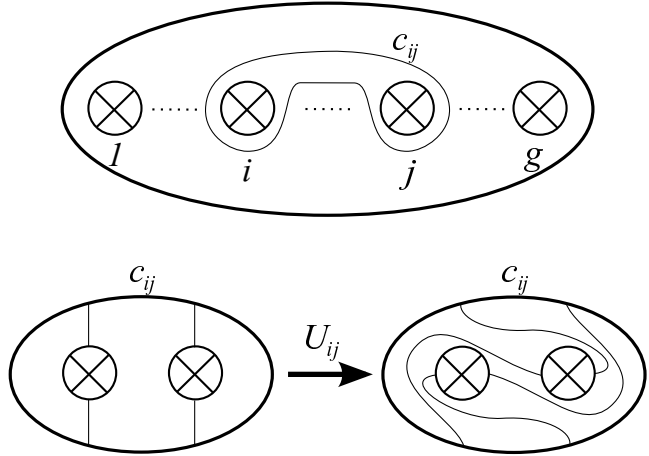}
\caption{The loop $c_{ij}$ and the diffeomorphism $U_{ij}$ over $N_g$.}\label{U}
\end{figure}

Since $\I(N_g)$ is a normal subgroup of $\g(N_g)$, the normal generating
set for $\I(N_g)$ in $\g(N_g)$ is a normal generating set for $\I(N_g)$
in $\M(N_g)$.
In addition, each normal generator for $\I(N_g)$ in
Proposition~\ref{normalgen} is conjugation of some normal generator for
$\I(N_g)$ in Proposition~\ref{normalgen2} by a product of some
$U_{i,j}$.
For example, we have
$$Y_{i;j}^2=(U_{i-i,i}\cdots{}U_{1,2})(U_{j-i,j}\cdots{}U_{2,3})Y_{1;2}^2(U_{2,3}^{-1}\cdots{}U_{j-1,j}^{-1})(U_{1,2}^{-1}\cdots{}U_{i-1,i}^{-1})$$
for $i<j$.
Thus we finish the proof of Proposition~\ref{normalgen2}.

\section{Proof of Theorem~\ref{thm}}\label{proofthm}

We put a point $*\in{N_{g-1}}$.
Let $\gamma_1,\gamma_2,\dots,\gamma_{g-1}$ be oriented loops on
$N_{g-1}$ starting at $*$ as shown in Figure~\ref{spin}.
Note that $\pi_1(N_{g-1},*)$ is generated by
$[\gamma_1],[\gamma_2],\dots,[\gamma_{g-1}]$.
For $1\leq{i}\leq{g}$, let $s_i:\pi_1(N_{g-1},*)\to\M(N_g)$ be the
crosscap pushing map defined in \cite{s3}, such that
$s_i([\gamma_j])=Y_{i;j}$ if $j<i$, $s_i([\gamma_j])=Y_{i;j+1}$ if
$j\geq{i}$.
We note that the crosscap pushing map is an anti-homomorphism.
Namely, we have
$s_i([\alpha][\beta])=s_i([\beta])s_i([\alpha])$ for
$[\alpha],[\beta]\in\pi_1(N_{g-1},*)$.
For $[\alpha]\in\pi_1(N_{g-1},*)$, $s_i([\alpha])$ is a
$Y$-homeomorphism if $\alpha$ is an $M$-circle, $s_i([\alpha])$ is a
product of two Dehn twists if $\alpha$ is an $A$-circle.

\begin{figure}[htbp]
\includegraphics[scale=0.5]{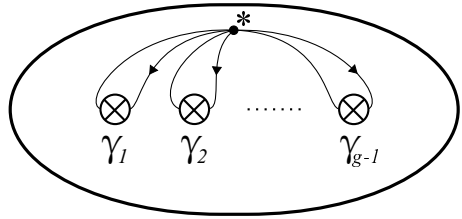}
\caption{The loops $\gamma_1,\gamma_2,\dots,\gamma_{g-1}$.}\label{spin}
\end{figure}

We have the following as a corollary of Proposition~\ref{normalgen2}.

\begin{cor}\label{normalgen3}
For $g\geq 4$, $\I(N_g)$ is normally generated by the following elements
 in $\M(N_g)$.
\begin{enumerate}
 \item $Y_{1;2}^2$,
 \item $[Y_{2;3},Y_{1;3}^{-1}]$,
 \item $[Y_{1;2},Y_{1;3}Y_{2;3}]$,
       $[Y_{1;3},Y_{3;2}Y_{1;2}]$,
 \item $[Y_{1;2},Y_{3;4}]$,
       $[Y_{2;3}^{-2}Y_{1;3}Y_{2;3}^2,Y_{2;4}]$,
 \item $(Y_{1;2}Y_{1;3}Y_{1;4})^2$,
 \item $(Y_{2;1}^{-1}Y_{1;2}Y_{3;2}Y_{2;3}^{-1}Y_{1;3}^{-1}Y_{3;1})^2$,
 \item $T_{1,2,3,4}^2(\prod_{5\leq{m}\leq{g}}Y_{m;m-1}^{-2}\cdots{Y_{m;6}^{-2}}Y_{m;5}^{-2}Y_{m;4}^{-1}Y_{m;3}^{-1}Y_{m;2}^{-1}Y_{m;1}^{-1})$.
\end{enumerate}
\end{cor}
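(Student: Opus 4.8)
The plan is to deduce the Corollary from Proposition~\ref{normalgen2} by showing that the normal closure in $\M(N_g)$ of the seven families listed in Corollary~\ref{normalgen3} equals the normal closure of the seven families of Proposition~\ref{normalgen2}, which is $\I(N_g)$. Writing $\la\!\la X\ra\!\ra$ for the normal closure of a set $X$ in $\M(N_g)$, I would compare the two lists entry by entry. Families $(1)$, the first entries of $(3)$ and $(4)$, and families $(5)$ and $(7)$ are identical in the two statements, so only family $(2)$, the second entries of $(3)$ and $(4)$, and family $(6)$ require attention.

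First I would fix the common core $M_0:=\la\!\la Y_{1;2}^2,\,[Y_{1;3},Y_{2;3}]\ra\!\ra$ and check $M_0\subseteq\la\!\la\mathrm{Prop}\ra\!\ra$ and $M_0\subseteq\la\!\la\mathrm{Cor}\ra\!\ra$. The first inclusion is immediate, since $Y_{1;2}^2$ and $[Y_{1;3},Y_{2;3}]$ are the families $(1)$ and $(2)$ of Proposition~\ref{normalgen2}. For the second, the direct computation $Y_{1;3}[Y_{1;3},Y_{2;3}]Y_{1;3}^{-1}=[Y_{2;3},Y_{1;3}^{-1}]$ shows that family $(2)$ of Corollary~\ref{normalgen3} is a conjugate of family $(2)$ of Proposition~\ref{normalgen2}, so $[Y_{1;3},Y_{2;3}]\in\la\!\la\mathrm{Cor}\ra\!\ra$, while $Y_{1;2}^2$ is the common family $(1)$. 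Moreover, as recorded in the proof of Proposition~\ref{normalgen2}, every $Y_{i;j}^2$ is a conjugate of $Y_{1;2}^2$ and, via Proposition~\ref{normalgen}, every $[Y_{i;k},Y_{j;k}]$ is a conjugate of $[Y_{1;3},Y_{2;3}]$ by a product of the maps $U_{i,j}$; hence all the squares $Y_{i;j}^2$ and all the commutators $[Y_{i;k},Y_{j;k}]$ lie in $M_0$.

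The key observation is that, modulo $M_0$, every $Y_{i;j}$ is an involution and any two $Y_{i;k}$, $Y_{j;k}$ with equal second index commute. With this, I would check that each differing Corollary generator reduces, up to inversion, to the matching Proposition generator: in $(2)$ one has $[Y_{2;3},Y_{1;3}^{-1}]\equiv[Y_{1;3},Y_{2;3}]^{-1}$; in the second entry of $(3)$ the relation $Y_{3;2}Y_{1;2}\equiv Y_{1;2}Y_{3;2}$ gives $[Y_{1;3},Y_{3;2}Y_{1;2}]\equiv[Y_{1;3},Y_{1;2}Y_{3;2}]$; in the second entry of $(4)$ the conjugating factor $Y_{2;3}^2\in M_0$ disappears, so $[Y_{2;3}^{-2}Y_{1;3}Y_{2;3}^2,Y_{2;4}]\equiv[Y_{1;3},Y_{2;4}]$; and in $(6)$ the three sign changes disappear, so the two squared words coincide, all congruences being modulo $M_0$. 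Thus each Corollary generator $x$ satisfies $x\,y^{\mp1}\in M_0$ for its matching Proposition generator $y$, whence $x=(x\,y^{\mp1})\,y^{\pm1}\in\la\!\la\mathrm{Prop}\ra\!\ra$; the symmetric computation gives $\la\!\la\mathrm{Prop}\ra\!\ra\subseteq\la\!\la\mathrm{Cor}\ra\!\ra$, and therefore $\la\!\la\mathrm{Cor}\ra\!\ra=\I(N_g)$.

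The hard part is the second entry of family $(3)$: the difference between $Y_{1;2}Y_{3;2}$ and $Y_{3;2}Y_{1;2}$ is not absorbed by the squares $Y_{i;j}^2$ alone, so one genuinely needs the commutation relation $[Y_{1;2},Y_{3;2}]\in M_0$ supplied by family $(2)$, together with the fact that this whole conjugacy class already lies inside $\la\!\la\mathrm{Cor}\ra\!\ra$ by the conjugacy $Y_{1;3}[Y_{1;3},Y_{2;3}]Y_{1;3}^{-1}=[Y_{2;3},Y_{1;3}^{-1}]$. Once this is in place the remaining families are routine bookkeeping with the congruence $Y_{i;j}^{-1}\equiv Y_{i;j}\pmod{M_0}$.
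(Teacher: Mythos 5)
Your argument is correct and is essentially the justification the paper leaves implicit: the Corollary's generators differ from those of Proposition~\ref{normalgen2} only by conjugation (as in $Y_{1;3}[Y_{1;3},Y_{2;3}]Y_{1;3}^{-1}=[Y_{2;3},Y_{1;3}^{-1}]$), by inversion, or by modifications that vanish modulo the normal closure of the squares $Y_{i;j}^2$ and the commutators $[Y_{i;k},Y_{j;k}]$, so the two normal closures in $\M(N_g)$ coincide. The paper states the Corollary without proof, and your entry-by-entry verification, including the observation that the core $M_0$ lies in both normal closures via the $U_{i,j}$-conjugations from the proof of Proposition~\ref{normalgen2}, correctly fills in that omitted routine step.
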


We now prove Theorem~\ref{thm}.

\begin{proof}[Proof of Theorem~\ref{thm}]
We show that each normal generator for $\I(N_g)$ in
Corollary~\ref{normalgen3} is a product of BSCC maps and BP maps.
\begin{enumerate}
 \item We have that $Y_{1;2}^2=s_1([\gamma_1^2])=t_{c_1}t_{c_2}$, where
       $c_1$ and $c_2$ are simple closed curves as shown in
       Figure~\ref{(1)}.
       Note that $t_{c_1}$ is a BSCC map of type $(1,2)$.
       Since $c_2$ bounds a ${\rm M\ddot{o}bius}$ band, $t_{c_2}$ is
       trivial.
       Hence we have that $Y_{1;2}^2$ is a BSCC map of type $(1,2)$.
       \begin{figure}[htbp]
	\includegraphics[scale=0.5]{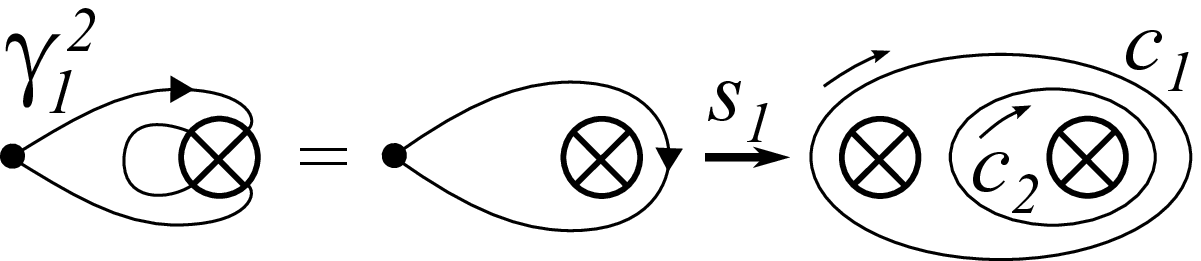}
	\caption{}\label{(1)}
       \end{figure}
 \item We have that
       $Y_{1;3}Y_{2;3}Y_{1;3}^{-1}=Y_{1;3}s_2([\gamma_2])Y_{1;3}^{-1}=s_2([\gamma_1^2\gamma_2])$
       (see Figure~\ref{(2)}).
       Hence we have that
       \begin{eqnarray*}
	[Y_{2;3},Y_{1;3}^{-1}]
	&=&
	Y_{2;3}^{-1}Y_{1;3}Y_{2;3}Y_{1;3}^{-1}\\
	&=&
	s_2([\gamma_2^{-1}])s_2([\gamma_1^2\gamma_2])\\
	&=&
	s_2([\gamma_1^2\gamma_2][\gamma_2^{-1}])\\
	&=&
	s_2([\gamma_1^2])\\
	&=&
	t_{c_1}t_{c_2},
       \end{eqnarray*}
       where $c_1$ and $c_2$ are simple closed curves as shown in
       Figure~\ref{(2)}.
       Similar to (1), we have that $[Y_{2;3},Y_{1;3}^{-1}]$ is a BSCC
       map of type $(1,2)$.
       \begin{figure}[htbp]
	\includegraphics[scale=0.5]{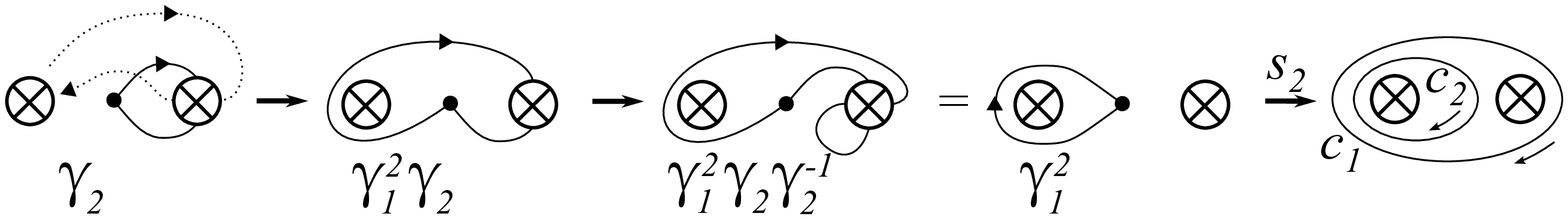}
	\caption{}\label{(2)}
       \end{figure}
 \item We have that
       \begin{eqnarray*}
	Y_{2;3}^{-1}Y_{1;3}^{-1}Y_{1;2}Y_{1;3}Y_{2;3}
	&=&
	Y_{2;3}^{-1}Y_{1;3}^{-1}s_1([\gamma_1])Y_{1;3}Y_{2;3}\\
	&=&
	Y_{2;3}^{-1}s_1([\gamma_2\gamma_1\gamma_2^{-1}])Y_{2;3}\\
	&=&
	s_1([\gamma_1^{-1}])\\
	&=&
	Y_{1;2}^{-1}
       \end{eqnarray*}
       (see Figure~\ref{(3)}~(a)).
       Hence we have that
       $[Y_{1;2},Y_{1;3}Y_{2;3}]=Y_{1;2}^{-1}Y_{2;3}^{-1}Y_{1;3}^{-1}Y_{1;2}Y_{1;3}Y_{2;3}=Y_{1;2}^{-2}$.
       Similarly, we have that
        \begin{eqnarray*}
	Y_{1;2}^{-1}Y_{3;2}^{-1}Y_{1;3}Y_{3;2}Y_{1;2}
	&=&
	Y_{1;2}^{-1}Y_{3;2}^{-1}s_1([\gamma_2])Y_{3;2}Y_{1;2}\\
	&=&
	Y_{1;2}^{-1}s_1([\gamma_1^{-1}\gamma_2^{-1}\gamma_1])Y_{1;2}\\
	&=&
	s_1([\gamma_2^{-1}])\\
	&=&
	Y_{1;3}^{-1}
       \end{eqnarray*}
       (see Figure~\ref{(3)}~(b)).
       Hence we have that $[Y_{1;3},Y_{3;2}Y_{1;2}]=Y_{1;3}^{-2}$.
       Similar to (1), we have that $Y_{1;2}^{-2}$ and $Y_{1;3}^{-2}$
       are BSCC maps of type $(1,2)$.
       \begin{figure}[htbp]
	\subfigure[]{\includegraphics[scale=0.5]{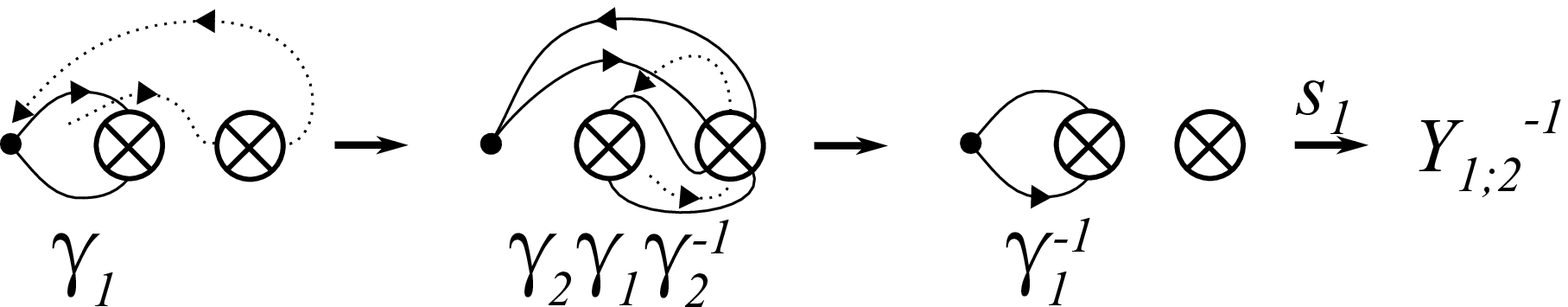}}

	\subfigure[]{\includegraphics[scale=0.5]{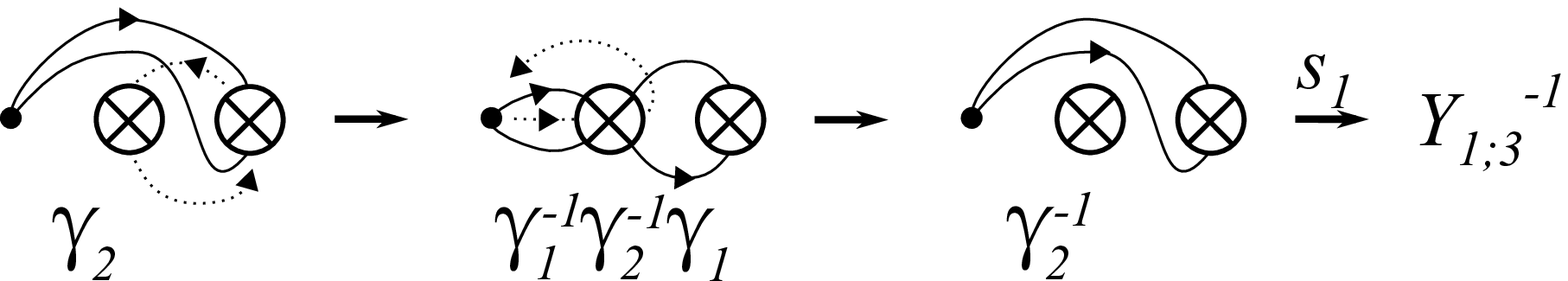}}
	\caption{}\label{(3)}
       \end{figure}
 \item In $\I(N_g)$, it is clear that $[Y_{1;2},Y_{3;4}]$ and
       $[Y_{2;3}^{-2}Y_{1;3}Y_{2;3}^2,Y_{2;4}]$ are equal to $1$.
       Note that
       $Y_{2;3}^{-2}Y_{1;3}Y_{2;3}^2=s_1([\gamma_1^{-2}\gamma_2\gamma_1^2])$
       (see Figure~\ref{(4)}).
       \begin{figure}[htbp]
	\includegraphics[scale=0.5]{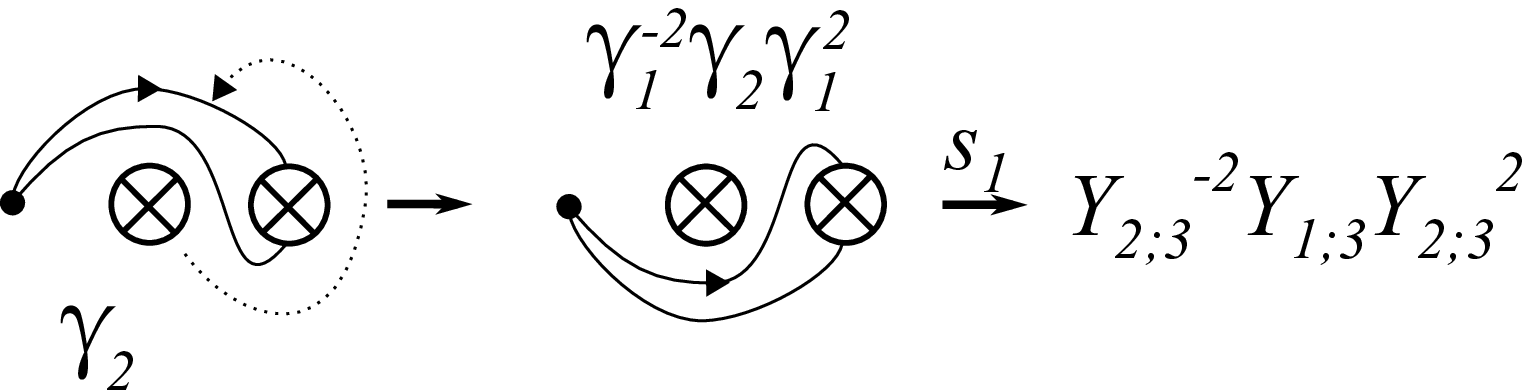}
	\caption{}\label{(4)}
       \end{figure}
 \item We have that
       $(Y_{1;2}Y_{1;3}Y_{1;4})^2=s_1([(\gamma_3\gamma_2\gamma_1)^2])=t_{c_1}t_{c_2}$,
       where $c_1$ and $c_2$ are simple closed curves as shown in
       Figure~\ref{(5)}.
       Note that $t_{c_1}$ is a BSCC map of type $(1,2)$ if $g\geq5$ and
       a BSCC map of type $(2,1)$ if $g=4$.
       Since $c_2$ bounds a ${\rm M\ddot{o}bius}$ band, $t_{c_2}$ is
       trivial.
       Hence we have that $(Y_{1;2}Y_{1;3}Y_{1;4})^2$ is a BSCC map of
       type $(1,2)$ if $g\geq5$ and a BSCC map of type $(2,1)$ if
       $g=4$.
       \begin{figure}[htbp]
	\includegraphics[scale=0.5]{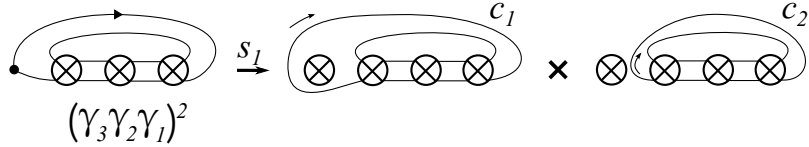}
	\caption{The black cross ``$\times$'' means the composition of
	$t_{c_1}$ with $t_{c_2}$.}\label{(5)}
       \end{figure}
 \item By the proof of Lemma~3.1 in \cite{s2}, we have
       $Y_{j;i}^{-1}Y_{i;j}=Y_{j;i}Y_{i;j}^{-1}=T_{i,j}^2$, for
       $1\leq{i<j}\leq{g}$.
       Hence we have that
       $(Y_{2;1}^{-1}Y_{1;2}Y_{3;2}Y_{2;3}^{-1}Y_{1;3}^{-1}Y_{3;1})^2=(T_{1,2}^2T_{2,3}^2T_{1,3}^{-2})^2$.
       \begin{lem}
	We have followings (see Figure~\ref{(6)L}).
	\begin{enumerate}
	 \item $T_{1,2}^2=L_1Y_{3;1}Y_{3;2}$, where
	       $L_1=(Y_{g;1}Y_{g;2}Y_{g;3}^2\cdots{Y_{g;g-1}^2})\cdots(Y_{4;1}Y_{4;2}Y_{4;3}^2)$.
	 \item $T_{2,3}^2=L_2Y_{1;2}Y_{1;3}$, where
	       $L_2=(Y_{g;1}^2Y_{g;2}Y_{g;3}Y_{g;4}^2\cdots{Y_{g;g-1}^2})\cdots(Y_{4;1}^2Y_{4;2}Y_{4;3})$.
	 \item $T_{1,3}^{-2}=L_3Y_{2;3}Y_{2;1}$, where
	       $L_3=(Y_{g;3}^{-1}Y_{g;1}Y_{g;2}^2\cdots{Y_{g;g-1}^2})\cdots(Y_{4;3}^{-1}Y_{4;1}Y_{4;2}^2Y_{4;3}^2)$.
	\end{enumerate}
	\end{lem}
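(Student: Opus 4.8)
The plan is to read each right-hand side through the crosscap pushing map $s_k$ and then to check the resulting composition of crosscap pushes against Figure~\ref{(6)L}. First I would rewrite every factor of $L_1,L_2,L_3$ as a single push: using $s_k([\gamma_j])=Y_{k;j}$ (with the shift $s_k([\gamma_j])=Y_{k;j+1}$ for $j\ge k$) together with the anti-homomorphism property $s_k([\alpha][\beta])=s_k([\beta])s_k([\alpha])$, the $k$-th factor $Y_{k;1}Y_{k;2}Y_{k;3}^2\cdots Y_{k;k-1}^2$ of $L_1$ equals $s_k(\gamma_{k-1}^2\cdots\gamma_3^2\gamma_2\gamma_1)$, while the trailing factor $Y_{3;1}Y_{3;2}$ equals $s_3(\gamma_2\gamma_1)$. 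Thus $T_{1,2}^2$ is claimed to be the composite of pushes $s_k$ of loops whose mod $2$ class is $\gamma_1+\gamma_2$, i.e. $A$-circles encircling exactly the first two crosscaps, the squared subwords $\gamma_j^2$ recording the crosscaps the push-loop runs around an even number of times. The same rewriting turns the right-hand sides of (2) and (3) into composites of pushes along $A$-circles encircling the crosscaps $\{2,3\}$ and $\{1,3\}$, the exponent $-1$ in $Y_{k;3}^{-1}$ recording that the loop in (3) is traversed in the opposite sense, which is what produces the inverse twist $T_{1,3}^{-2}$.

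Next I would establish the equalities geometrically. Since a push along an $A$-circle is a product of two Dehn twists, each $s_k$ above drags the $k$-th crosscap once around the region bounded by $\alpha_{\{1,2\}}$ (respectively $\alpha_{\{2,3\}}$, $\alpha_{\{1,3\}}$). Using Figure~\ref{(6)L}, I would isotope the image curves one crosscap at a time and verify that, after all of crosscaps $3,\dots,g$ have been swept around, the net homeomorphism agrees with $T_{1,2}^2$ on a system of curves filling $N_g$. A convenient way to organize this is an induction on $g$: peel off the outermost factor $s_g(\gamma_{g-1}^2\cdots\gamma_2\gamma_1)$ and reduce to the same identity with one fewer crosscap, with the two-push case $g=4$ as the base. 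Assertions (2) and (3) then follow by the identical argument after permuting the roles of the first three crosscaps, the orientation reversal in (3) giving the inverse. As a consistency check one can compare with the relation $T_{i,j}^2=Y_{j;i}^{-1}Y_{i;j}$ quoted just above the statement.

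The hard part will be the geometric bookkeeping in this last step. The pushes $s_k$ for different $k$ are governed by different anti-homomorphisms and do not commute a priori, so the order in which the crosscaps are swept must be tracked carefully; one must confirm that the interior crosscaps, traversed with even multiplicity, contribute nothing to the final twist, and that the twisting directions are fixed so that the signs in $L_3$ and the inverse in $T_{1,3}^{-2}$ come out correctly. The truly delicate point is to check that, once all cancellations are performed, no residual BSCC or BP map survives, so that the composite is \emph{exactly} $T_{1,2}^2$ and not $T_{1,2}^2$ times a Torelli element.
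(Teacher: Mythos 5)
Your reading of each factor of $L_1,L_2,L_3$ as a crosscap pushing map $s_k$ applied to an explicit loop (via the anti\nobreakdash-homomorphism property and the index shift in $s_k$), followed by a geometric verification against Figure~\ref{(6)L} that the resulting composite of pushes equals $T_{1,2}^2$ (resp.\ $T_{2,3}^2$, $T_{1,3}^{-2}$), is exactly the paper's argument: its entire proof of this lemma is Figure~\ref{(6)L}, whose caption declares the black crosses to be compositions of crosscap pushing maps, in the same style as the Example in Appendix~\ref{appendix}. Your proposed induction on $g$ and the consistency check against $T_{i,j}^2=Y_{j;i}^{-1}Y_{i;j}$ are only organizational additions to the same approach.
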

       \begin{figure}[htbp]
	\includegraphics[scale=0.5]{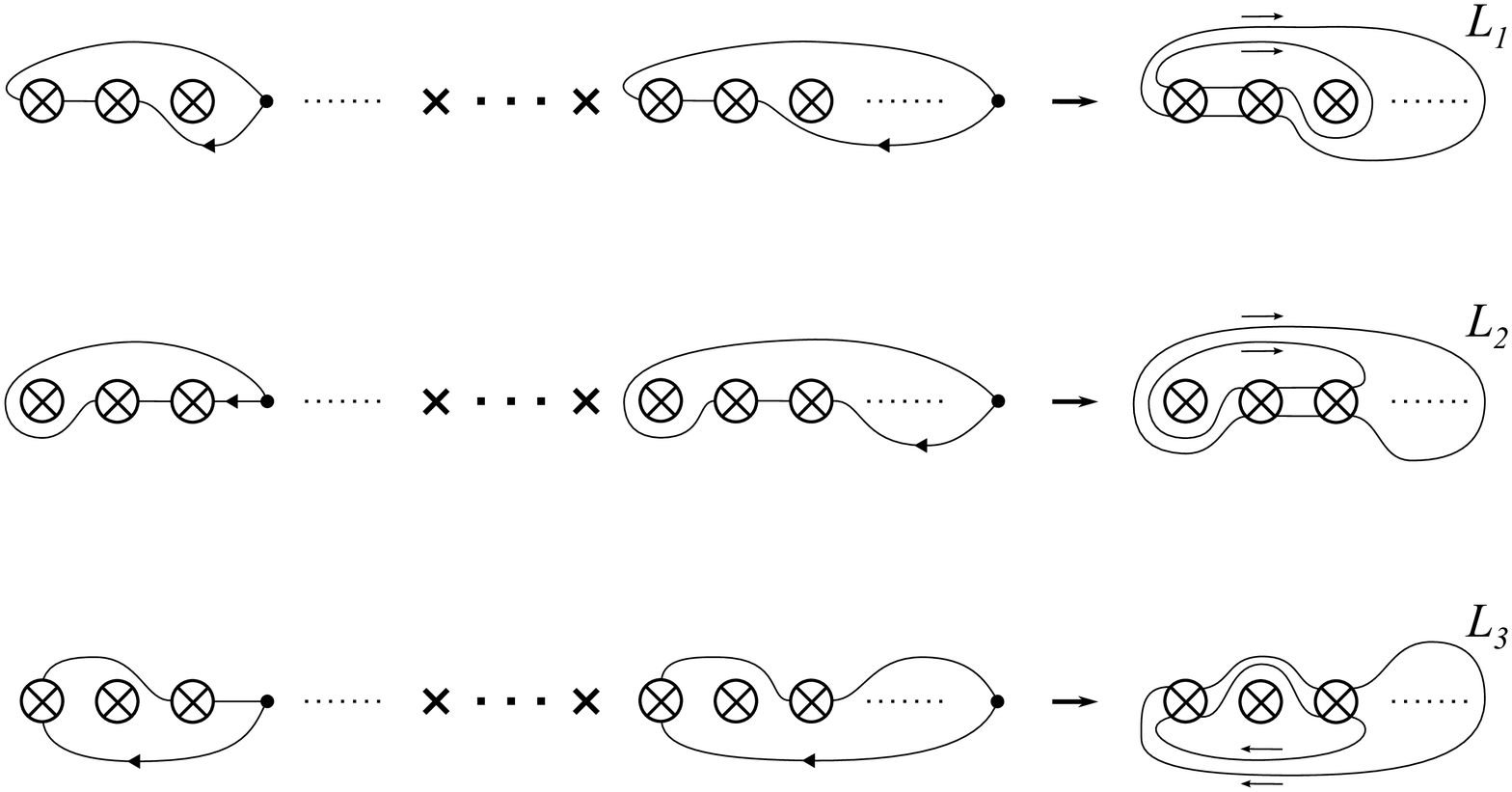}
	\caption{In this figure, and also Figures~\ref{(6)R132},
	\ref{(7)} and \ref{exam}, the black crosses ``$\times$'' mean
	compositions of the crosscap pushing maps.}\label{(6)L}
       \end{figure}
	We have
	\begin{eqnarray*}
	 &&Y_{3;2}^{-2}Y_{3;1}^{-1}(Y_{2;1}^{-1}Y_{1;2}Y_{3;2}Y_{2;3}^{-1}Y_{1;3}^{-1}Y_{3;1})^2Y_{3;1}Y_{3;2}^2\\
	 &=&Y_{3;2}^{-2}Y_{3;1}^{-1}\cdot{}L_1Y_{3;1}Y_{3;2}\cdot{}Y_{3;2}Y_{2;3}^{-1}\cdot{}L_3Y_{2;3}Y_{2;1}\\
	 &&\cdot{}Y_{2;1}^{-1}Y_{1;2}\cdot{}L_2Y_{1;2}Y_{1;3}\cdot{}Y_{1;3}^{-1}Y_{3;1}\cdot{}Y_{3;1}Y_{3;2}^2\\
	 &=&R_1R_3R_2Y_{1;2}^2Y_{3;1}^2Y_{3;2}^2,
	\end{eqnarray*}
	where $R_1=Y_{3;2}^{-2}Y_{3;1}^{-1}L_1Y_{3;1}Y_{3;2}^2$,
	$R_2=Y_{1;2}L_2Y_{1;2}^{-1}$ and $R_3=Y_{2;3}^{-1}L_3Y_{2;3}$
       (see Figure~\ref{(6)R}).
	Similar to (1), since $Y_{1;2}^2$, $Y_{3;1}^2$ and $Y_{3;2}^2$
	are BSCC maps of type $(1,2)$, it is suffice to show that
	$R_1R_3R_2$ is a product of BSCC maps.
       \begin{figure}[htbp]
	\includegraphics[scale=0.5]{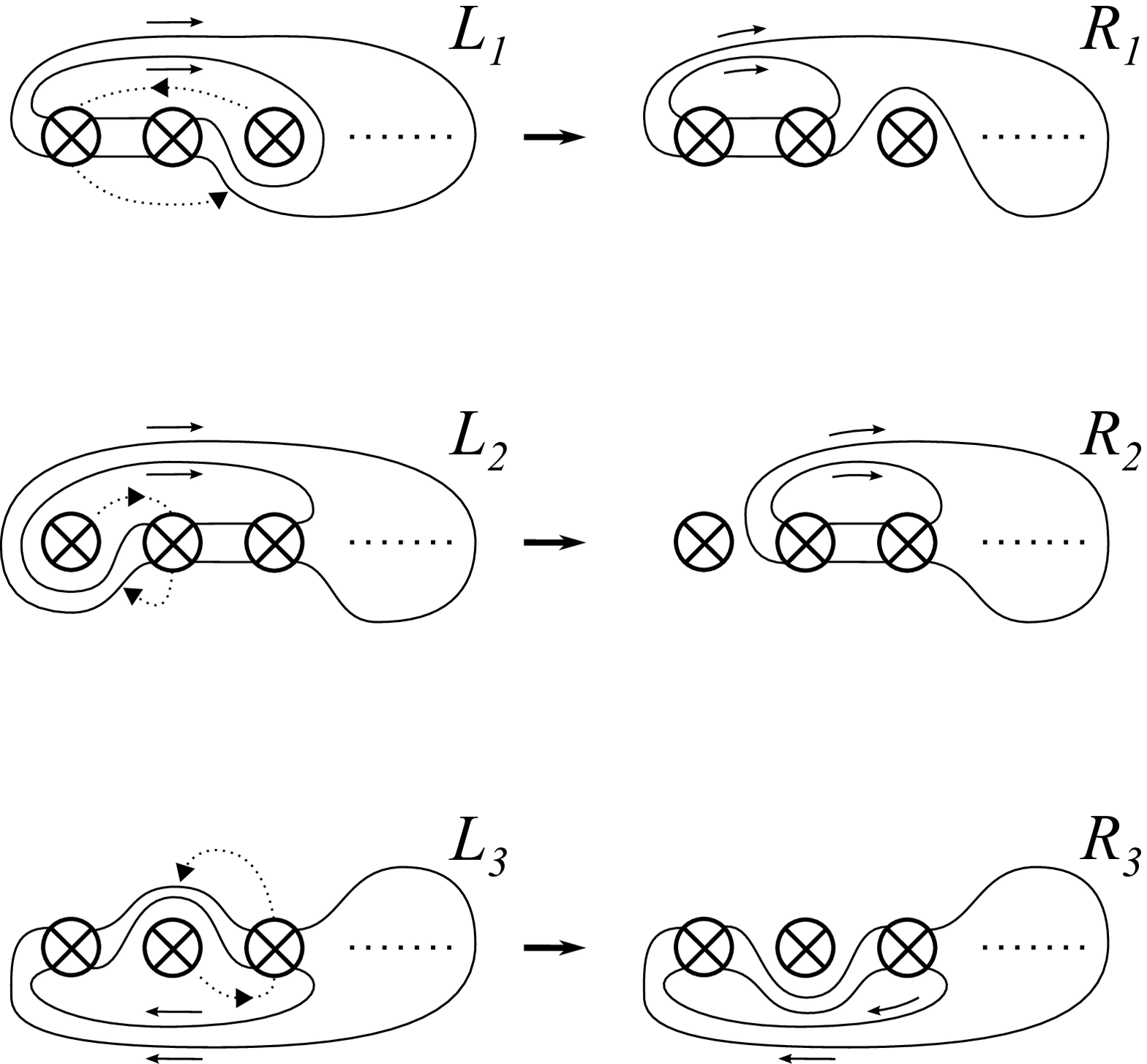}
	\caption{}\label{(6)R}
       \end{figure}
       Let $d_0$, $d_1$, $d_2$, $d_3$ and $d_4$ be simple closed curves
       as shown in Figure~\ref{(6)R132}.
       Note that $t_{d_0}$ is a BSCC map of type $(1,3)$ or $(1,g-3)$,
       and $t_{d_3}$ is a BSCC map of type $(2,1)$.
       In addition, since $d_4$ bounds a M\"obius band, we have
       $t_{d_4}=1$.
       Let $R_{32}=t_{d_1}t_{d_2}$.
       By the lantern relation, we have that $R_{32}$ is a product of
       $R_3R_2$ and $t_{d_0}$.
       Let $R_{132}=t_{d_3}t_{d_4}$.
       By the lantern relation, we have that $R_{132}$ is a product of
       $R_1R_{32}$ and $t_{d_0}$.
       Hence we have that $R_{132}$ is a product of $R_1R_3R_2$ and
       $t_{d_0}$.
       Therefore $R_1R_3R_2$ is a product of BSCC maps of type one and a
       BSCC map of type $(2,1)$.
       In particular, if $g=4$, since $t_{d_0}$ is trivial, we have that
       $R_1R_3R_2$ is a BSCC map of type $(2,1)$, and if $g\geq5$, by
       Lemma~\ref{lem}, we have that $R_1R_3R_2$ is a product of BSCC
       maps of type $(1,2)$.
       \begin{figure}[htbp]
	\includegraphics[scale=0.5]{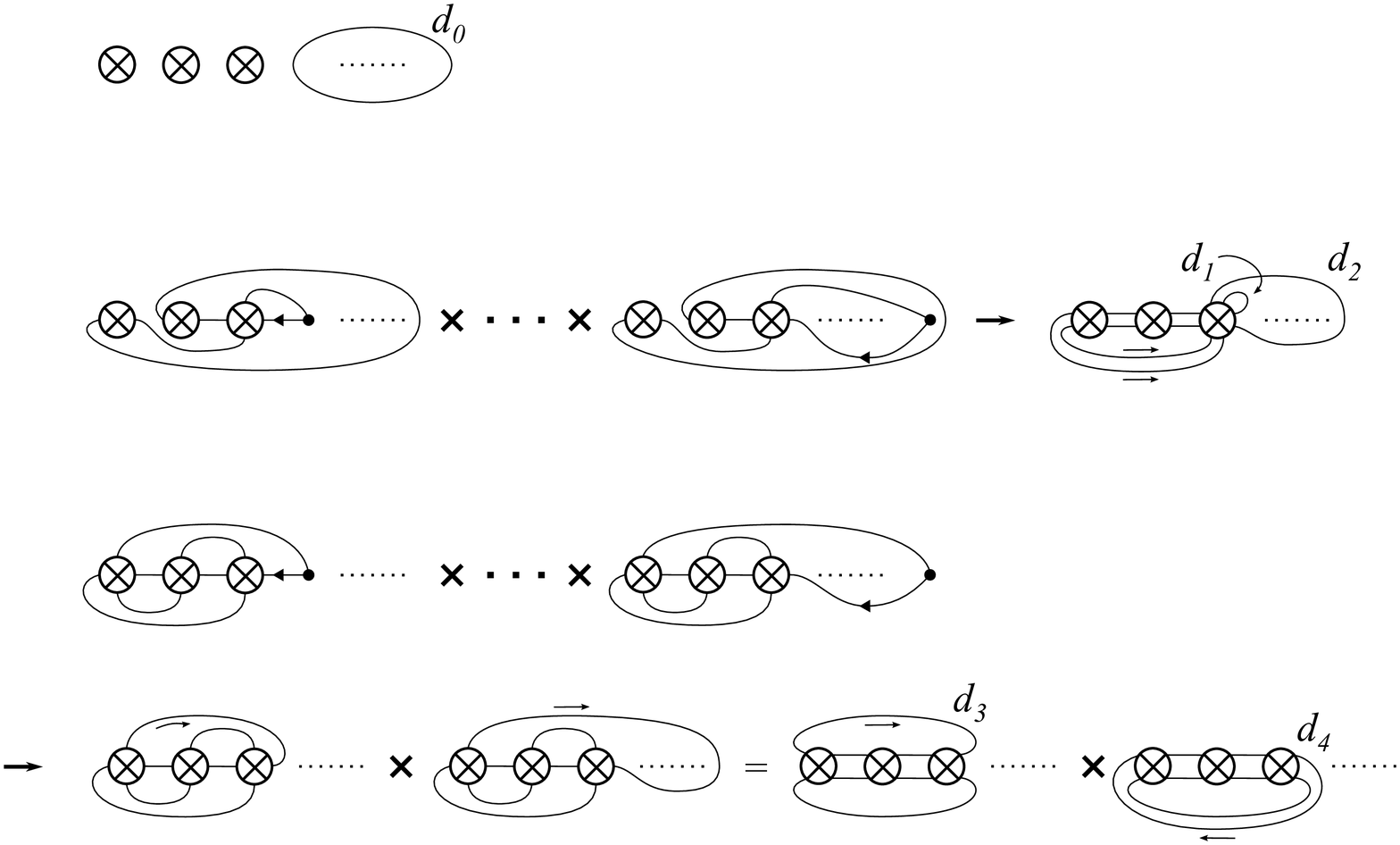}
	\caption{}\label{(6)R132}
       \end{figure}
 \item We have 
       \begin{eqnarray*}
	T_{1,2,3,4}^{-1}T_{1,2,3,4}'^{-1}
	&=&\prod_{5\leq{m}\leq{g}}s_m([\gamma_1^{-1}\gamma_2^{-1}\gamma_3^{-1}\gamma_4^{-1}\gamma_5^{-2}\gamma_6^{-2}\cdots\gamma_{m-1}^{-2}])\\
	&=&\prod_{5\leq{m}\leq{g}}Y_{m;m-1}^{-2}\cdots{Y_{m;6}^{-2}}Y_{m;5}^{-2}Y_{m;4}^{-1}Y_{m;3}^{-1}Y_{m;2}^{-1}Y_{m;1}^{-1}.
       \end{eqnarray*}
       (see Figure~\ref{(7)}).
       Hence we have
       $$T_{1,2,3,4}^2(\prod_{5\leq{m}\leq{g}}Y_{m;m-1}^{-2}\cdots{Y_{m;6}^{-2}}Y_{m;5}^{-2}Y_{m;4}^{-1}Y_{m;3}^{-1}Y_{m;2}^{-1}Y_{m;1}^{-1})=T_{1,2,3,4}T_{1,2,3,4}'^{-1}.$$
       Thus it is a BP map of type $(1,1)$.
       \begin{figure}[htbp]
	\includegraphics[scale=0.5]{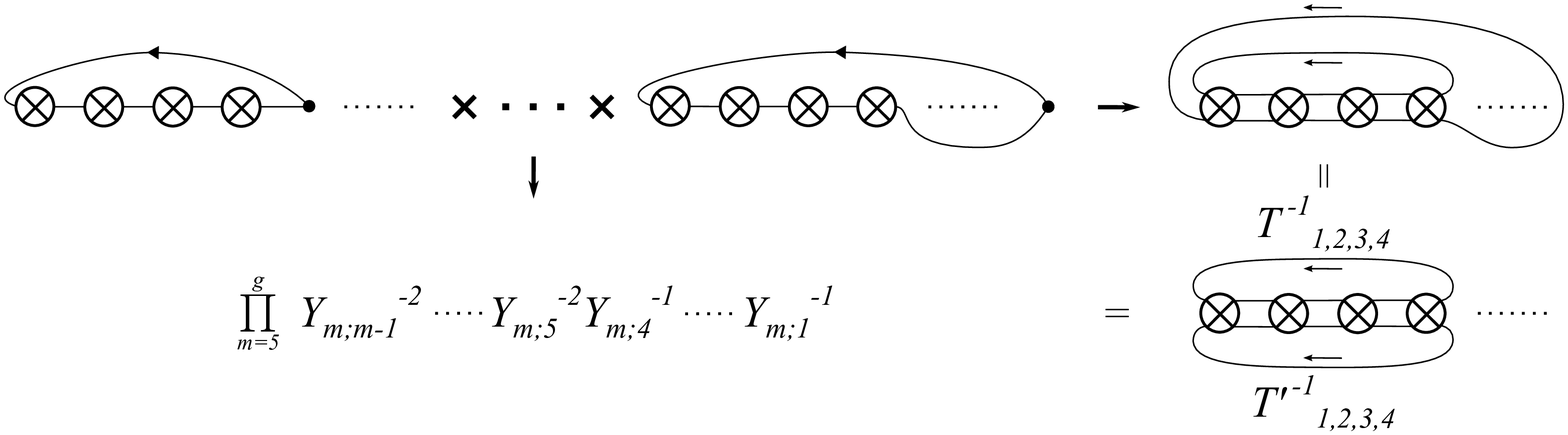}
	\caption{}\label{(7)}
       \end{figure}
\end{enumerate}
Thus we complete the proof.
\end{proof}

\appendix
\section{}\label{appendix}

In this appendix, we show the equation~(\ref{append(I)}) in
Section~\ref{basic} and the equation~(\ref{append(II)}) in
Section~\ref{norgen}.

For (\ref{append(I)}), it suffices to show the following lemma.

\begin{lem}
For $1\leq{}i_1<i_2<\cdots<i_k\leq{g}$, let $c_{i_1,\dots,i_k}$ be a
 simple closed curve on $N_g$ as shown in Figure~\ref{loopI3}.
Then for $2\leq{h}\leq{g}$ we have
$$t_{c_{1,\dots,h}}=(t_{c_{1,2}}\cdots{t_{c_{1,h-1}}t_{c_{1,h}}})\cdots(t_{c_{h-2,h-1}}t_{c_{h-2,h}})(t_{c_{h-1,h}}).$$
\begin{figure}[htbp]
\includegraphics[scale=0.5]{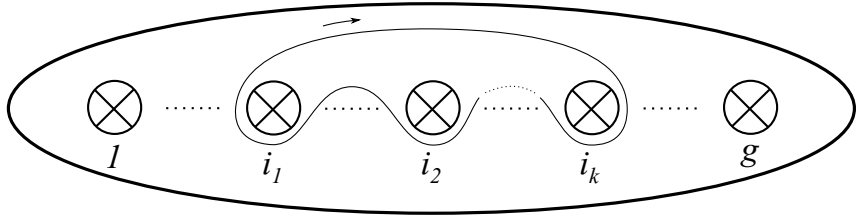}\hspace{0.5cm}
\caption{The curve $c_{i_1,\dots,i_k}$ on $N_g$.}\label{loopI3}
\end{figure}
\end{lem}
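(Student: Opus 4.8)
The plan is to prove the identity by induction on $h$, using only two ingredients: the lantern relation on an embedded four-holed sphere, and the fact (recalled in Section~\ref{basic}, see Theorem~3.4 of \cite{e}) that a Dehn twist about a curve bounding a M\"obius band is trivial. In particular each single-index curve $c_i$ bounds a M\"obius band, so $t_{c_i}=1$; this triviality is what makes the superfluous terms collapse in every lantern relation I use.

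First I would isolate the following key relation, for $2\le h$:
\begin{equation*}
t_{c_{1,\dots,h}}=t_{c_{1,2}}t_{c_{1,3}}\cdots t_{c_{1,h}}\cdot t_{c_{2,\dots,h}}.\tag{$\star$}
\end{equation*}
Granting $(\star)$, the lemma follows by iterating it, peeling off the smallest index at each stage: applying $(\star)$ to $t_{c_{2,\dots,h}}$, then to $t_{c_{3,\dots,h}}$, and so on, produces
$$t_{c_{1,\dots,h}}=\Big(t_{c_{1,2}}\cdots t_{c_{1,h}}\Big)\Big(t_{c_{2,3}}\cdots t_{c_{2,h}}\Big)\cdots\Big(t_{c_{h-1,h}}\Big)\cdot t_{c_h},$$
and the last factor $t_{c_h}=1$ disappears, giving exactly the asserted product.

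It therefore remains to establish $(\star)$, which I would also prove by induction on $h$. The base case $h=3$ reads $t_{c_{1,2,3}}=t_{c_{1,2}}t_{c_{1,3}}t_{c_{2,3}}$ and is a single lantern relation: take the four-holed sphere whose three inner boundaries are the curves encircling the crosscaps $1,2,3$ and whose outer boundary is $c_{1,2,3}$; the three inner boundary twists are trivial, while the three diagonal curves are precisely $c_{1,2}$, $c_{1,3}$, $c_{2,3}$. For the inductive step I would apply the lantern relation to the four-holed sphere, sitting inside the block bounded by $c_{1,\dots,h}$, whose inner boundaries are the crosscap $1$, the crosscap $h$, and the sub-block bounded by $c_{2,\dots,h-1}$. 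The two crosscap twists vanish, the three diagonal curves are $c_{1,h}$, $c_{1,\dots,h-1}$ and $c_{2,\dots,h}$, and after substituting the inductive hypothesis $(\star)$ for $t_{c_{1,\dots,h-1}}$ the curve $c_{2,\dots,h-1}$ cancels, leaving a product of the $t_{c_{1,j}}$ and $t_{c_{2,\dots,h}}$.

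The main obstacle is bookkeeping rather than conceptual: I must fix the cyclic arrangement of the four boundary components in order to read off the correct order of the three diagonal twists in the lantern relation, and then verify the commutations needed to rearrange the resulting word into the exact left-to-right order displayed in $(\star)$ (and hence in the lemma). These commutations reduce to checking, directly from Figure~\ref{loopI3}, which of the block-curves and pair-curves can be realized disjointly; this is the one genuinely figure-dependent step, and it is where the care is required.
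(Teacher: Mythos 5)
Your argument is correct in outline but follows a genuinely different route from the paper. You prove the key recursion $t_{c_{1,\dots,h}}=(t_{c_{1,2}}\cdots t_{c_{1,h}})\,t_{c_{2,\dots,h}}$ by induction via the lantern relation on the four-holed sphere with holes the first crosscap, the block $c_{2,\dots,h-1}$, the last crosscap and the outer boundary $c_{1,\dots,h}$ (plus the vanishing of twists about curves bounding M\"obius bands); this is essentially Johnson's chain-and-lantern argument transplanted to $N_g$. The paper arrives at the \emph{same} recursion by a different mechanism: it telescopes $t_{c_{1,\dots,h}}=\prod_m\bigl(t_{c_{m,\dots,h}}t_{c_{m+1,\dots,h}}^{-1}\bigr)$ (using $t_{c_h}=1$), identifies each factor as a crosscap pushing map $s_m([\gamma_{h-1}^2\cdots\gamma_m^2])$, and then uses the anti-homomorphism property of $s_m$ together with $s_m([\gamma_j^2])=Y_{m;j+1}^2=t_{c_{m,j+1}}$ to expand it as $t_{c_{m,m+1}}\cdots t_{c_{m,h}}$. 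The crosscap-pushing computation buys you an automatic identification of the output with the precise curves $c_{i,j}$ of Figure~\ref{loopI3}, since $t_{c_{i,j}}=Y_{i;j}^2$ by definition; your lantern argument buys independence from the machinery of Section~\ref{proofthm} at the cost of more curve bookkeeping.

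One caution: you locate the figure-dependent work in checking disjointness for the commutations, but the more delicate point is the \emph{isotopy class identification} of the lantern's diagonal curves. The diagonal encircling crosscaps $1$ and $h$ must pass around the middle block on one side or the other, and you need it to be isotopic in $N_g$ to the specific curve $c_{1,h}$ of Figure~\ref{loopI3} (the boundary of a neighborhood of $\alpha_1\cup\alpha_{\{1,h\}}$), not merely to some curve bounding an $N_2^1$ containing those two crosscaps; similarly for the diagonals you call $c_{1,\dots,h-1}$ and $c_{2,\dots,h}$. If the wrong side is chosen the identity holds only up to conjugation, which would suffice for the application in Lemma~\ref{lem} but not for the lemma as literally stated. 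Together with fixing the cyclic order of the three diagonal twists (you need the order $x_{12},x_{13},x_{23}$ so that $t_{c_{2,\dots,h-1}}$ can be cancelled after substituting the inductive hypothesis), these verifications are the substance that must be supplied to complete your proof.
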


\begin{proof}
We first note that $t_{c_{i,j}}=Y_{i;j}^2$.
We see
\begin{eqnarray*}
t_{c_{1,\dots,h}}
&=&(t_{c_{1,\dots,h}}t_{c_{2,\dots,h}}^{-1})\cdots(t_{c_{h-2,h-1,h}}t_{c_{h-1,h}}^{-1})(t_{c_{h-1,h}}t_{c_h}^{-1})\\
&=&(s_1([\gamma_{h-1}^2\cdots\gamma_1^2]))\cdots(s_{h-2}([\gamma_{h-1}^2\gamma_{h-2}^2]))(s_{h-1}([\gamma_{h-1}^2]))\\
&=&(Y_{1;2}^2\cdots{}Y_{1;h-1}^2Y_{1;h}^2)\cdots(Y_{h-2;h-1}^2Y_{h-2;h}^2)(Y_{h-1;h}^2)\\
&=&(t_{c_{1,2}}\cdots{t_{c_{1,h-1}}t_{c_{1,h}}})\cdots(t_{c_{h-2,h-1}}t_{c_{h-2,h}})(t_{c_{h-1,h}}).
\end{eqnarray*}
Thus we obtain the claim.
\end{proof}

\begin{exam}
For $1\leq{i}\leq{g-1}$ we have
\begin{eqnarray*}
Y_{g;i}&=&
(Y_{1;2}^2\cdots{}Y_{1;g-1}^2Y_{1;i}^{-1}Y_{1;g})\cdots(Y_{i-1;i}^2\cdots{}Y_{i-1;g-1}^2Y_{i-1;i}^{-1}Y_{i-1;g})\\
&&\cdot(Y_{i+1;i+2}^2\cdots{}Y_{i+1;g-1}^2Y_{i+1;i}^{-1}Y_{i+1;g}Y_{i+1;i}^2)\cdots(Y_{g-2;g-1}^2Y_{g-2;i}^{-1}Y_{g-2;g}Y_{g-2;i}^2)\\
&&\cdot(Y_{g-1;i}^{-1}Y_{g-1;g}Y_{g-1;i}^2)Y_{i;g}.
\end{eqnarray*}
\end{exam}

\begin{proof}
Note that $Y_{g;i}Y_{i;g}^{-1}=T_{i,g}^2$.
We see
\begin{eqnarray*}
T_{i,g}^2
&=&\prod_{1\leq{m}\leq{i-1}}s_m([\gamma_{g-1}\gamma_{i-1}^{-1}\gamma_{g-2}^2\cdots\gamma_m^2])\prod_{i+1\leq{m}\leq{g-2}}s_m([\gamma_i^2\gamma_{g-1}\gamma_i^{-1}\gamma_{g-2}^2\cdots\gamma_m^2])\\
&&\cdot{}s_{g-1}([\gamma_i^2\gamma_{g-1}\gamma_i^{-1}])\\
&=&\prod_{1\leq{m}\leq{i-1}}(Y_{m;m+1}^2\cdots{}Y_{m;g-1}^2Y_{m;i}^{-1}Y_{m;g})\prod_{i+1\leq{m}\leq{g-2}}(Y_{m;m+1}^2\cdots{}Y_{m;g-1}^2Y_{m;i}^{-1}Y_{m;g}Y_{m;i}^2)\\
&&\cdot(Y_{g-1;i}^{-1}Y_{g-1;g}Y_{g-1;i}^2)
\end{eqnarray*}
(see Figure~\ref{exam}).
Thus we obtain the claim.
\begin{figure}[htbp]
\includegraphics[scale=0.5]{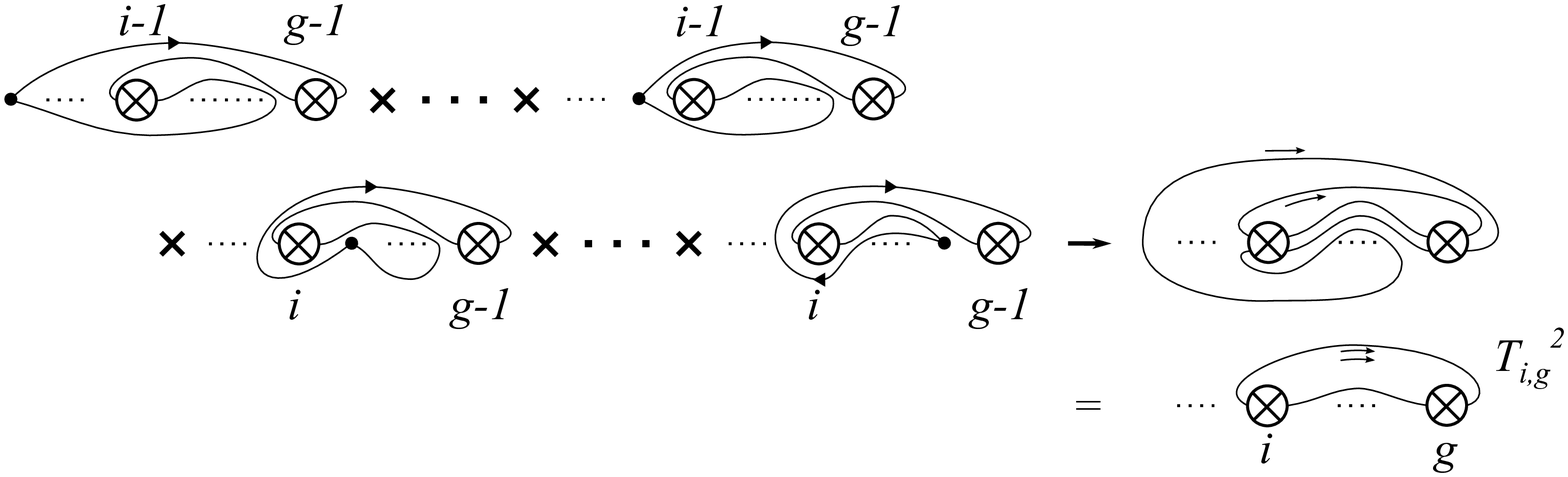}\hspace{0.5cm}
\caption{}\label{exam}
\end{figure}
\end{proof}
Thus we obtain the equation~(\ref{append(II)}).



\end{document}